\DeclareRobustCommand{\SkipTocEntry}[5]{}
\begin{document}

\begin{abstract}
We consider the assembly map for principal bundles with fiber a countable discrete group. We obtain an index-theoretic interpretation of this homomorphism by providing a tensor-product presentation for the module of sections associated to the Miščenko line bundle. In addition, we give a proof of Atiyah's $L^2$-index theorem in the general context of flat bundles of finitely generated projective Hilbert $C^*$-modules over compact Hausdorff spaces. We thereby also reestablish that the surjectivity of the Baum-Connes assembly map implies the Kadison-Kaplansky idempotent conjecture in the torsion-free case. Our approach does not rely on geometric $K$-homology but rather on an explicit construction of Alexander-Spanier cohomology classes coming from a Chern character for tracial function algebras.
\end{abstract}

\title[Index theory on the Mi\v{s}\v{c}enko bundle]{Index theory on the Mi\v{s}\v{c}enko bundle}

\author{Jens Kaad \and Valerio Proietti}

\address{Department of Mathematics and Computer Science, Syddansk Universitet, Campusvej 55, 5230 Odense M, Denmark}
\email{kaad@imada.sdu.dk}

\address{Research Center for Operator Algebras, East China Normal University, 3663 ZhongShan North Road, Putuo District, Shanghai 200062, China}
\email{proiettivalerio@math.ecnu.edu.cn}


\subjclass[2010]{19K35; 19K56, 46L85.}
\keywords{Baum-Connes conjecture, $L^2$-index theory, Kadison-Kaplansky idempotent conjecture, Noncommutative topology.}

\maketitle

\setcounter{tocdepth}{2}
\tableofcontents



\addtocontents{toc}{\SkipTocEntry}\section*{Introduction}


One of the important applications of the Baum-Connes conjecture is to the Kadison-Kaplansky idempotent conjecture, which asserts that the reduced group $C^*$-algebra of a countable discrete torsion-free group contains no non-trivial idempotents. Indeed, it holds that surjectivity of the Baum-Connes assembly map implies the idempotent conjecture \cite{BaCo:GLF,valette:bc}. The proof of this implication uses two main ingredients, namely the computation of analytic $K$-homology for finite CW complexes using geometric $K$-homology \cite{bhs:gkakhomology}, in combination with Atiyah's $L^2$-index theorem \cite{atiyah:ltwo}. In this paper a proof of this implication is provided which avoids the description of analytic $K$-homology using Baum-Douglas geometric $K$-cycles.

The aim of the present paper is thus twofold: on one hand we wish to clarify the index-theoretic interpretation of the assembly map for torsion-free groups, on the other we intend to show Atiyah's $L^2$-index theorem (Theorem \ref{t:gindtriv_intro} below) by means of a topological argument, involving nothing more than $K$-theory and the Chern character with values in Alexander-Spanier cohomology of the base space.

It is well-known, though maybe not well-documented, that the Miščenko-Fomenko index map coincides with the Baum-Connes assembly map, once the relevant $K$-homology groups are identified (Corollary \ref{cor:cdiag} below). To our knowledge, the only published proof of this result is in \cite{markus:ass}. The argument there makes use of propositions on fixed-point algebras from \cite{siegfried:module}, combined with a clever argument involving dual coactions on crossed products. 

The main obstacle towards a more direct proof, we think, seems to be the usual description of the module of sections associated to the Miščenko line bundle, which is not at first glance amenable to be analyzed through the standard tools of $\KKK$-theory, e.g., the Kasparov product and the descent homomorphism.

In the first part of this paper we provide a structure theorem for the Miščenko module in terms of tensor products and crossed products of Hilbert $C^*$-modules (Theorem \ref{thm:tprdM} below). This presentation is compatible with the basic functorial properties of $\KKK$-theory, and it allows for a different proof of the main theorem in \cite{markus:ass}. It turns out that this structure theorem can also be derived from more general results on weakly proper actions, which can be found in \cite{siegfried:universal,siegfried:module}. 

The second part of this paper is devoted to an index theorem for flat bundles of finitely generated projective modules over a unital $C^*$-algebra equipped with a faithful tracial state. The $L^2$-index theorem of Atiyah is a special case of this index theorem. The proof presented here works in the setting of flat bundles over any second-countable compact Hausdorff space. The extra assumption of second-countability is there for technical reasons relating to the interior Kasparov product. We conjecture that the theorem remains true for general compact Hausdorff spaces and that a proof can be given using the notion of $\KKK$-theory for non-separable $C^*$-algebras due to Skandalis \cite{AAS:BLU,Ska:OES}.

All other proofs of the $L^2$-index theorem known to the authors are set in a smooth setting, (\cite{skandalis:flat,skandalis:rho,atiyah:ltwo,mislin:ltwo,lucsch:vtt,schick:ltwo}), where the manifold structure is used to get a description of the Chern character in terms of connections (i.e., Chern-Weil theory). This approach can then be used in combination with the Baum-Douglas picture of $K$-homology (see \cite{baumdoug:khom,bhs:gkakhomology}) since this describes the entire analytic $K$-homology using data of geometric origin. Remark that geometric $K$-homology is only known to be isomorphic to analytic $K$-homology for (locally) finite CW complexes. 

In particular, the use of Baum-Douglas geometric $K$-cycles has been crucial in deriving the Kadison-Kaplansky idempotent conjecture (in the context of torsion-free groups satisfying the Baum-Connes conjecture) as a corollary of the $L^2$-index theorem (see for example \cite[Section 6.3]{valette:bc}).

Our main motivation for writing this paper was to provide a ``self-contained and topological'' proof of the fact that the surjectivity of the Baum-Connes assembly map implies the Kadison-Kaplansky idempotent conjecture for countable discrete torsion-free groups. By this we mean that the proof should not rely on the geometric $K$-homology description of analytic $K$-homology, but rather be based on the original Kasparov picture of $\KKK$-theory, together with topological considerations. In particular, our proof does not involve differential geometric entities such as connections and differential operators on manifolds.

Instead, our goals are achieved by virtue of Alexander-Spanier cohomology, whose definition incorporates a ``diagonal localization'' feature which we exploit to compute the index pairing with the Miščenko line bundle. An important ingredient is therefore supplied by the explicit description of the Chern character for compact Hausdorff spaces with values in Alexander-Spanier cohomology invented by Gorokhovsky \cite{goro:chern}.

%
%

\addtocontents{toc}{\SkipTocEntry}\section*{Acknowledgements}

We would like to thank Magnus Goffeng for proposing the use of $\text{II}_1$-factors, and Ryszard Nest for suggesting to look into localization at the diagonal in cohomology. Thanks also to Philipp Schmitt for our discussion on Proposition \ref{l:cover}. In addition, this work benefited from conversations with Paolo Antonini, Sara Azzali, and Georges Skandalis. We are grateful to these three authors as well.

The first author was partially supported by the DFF-Research Project 2 ``Automorphisms and Invariants of Operator Algebras'', no. 7014-00145B and by the Villum Foundation (grant 7423). The second author was supported by the Danish National Research Foundation through the Centre for Symmetry and Deformation (DNRF92), and by the Science and Technology Commission of Shanghai Municipality (STCSM), grant no. 13dz2260400.

\section{Preliminaries and main results}

Let $G$ be a countable discrete group and let us fix a second-countable, locally compact, Hausdorff space $\tilde{X}$, equipped with a free and proper action of $G$. We will moreover assume that the quotient space $\tilde{X}/G=X$ is compact and we note that the quotient map $p : \tilde{X}\to X$ forms a principal $G$-bundle. The action of $G$ on $\tilde{X}$ induces an action on the $C^*$-algebra $C_0(\tilde{X})$, which we denote by 
\[
\alpha \colon G\to \mathrm{Aut}(C_0(\tilde{X})) .
\]
We use the convention that $G$ acts on $\tilde{X}$ \emph{from the right} and the induced action on the $C^*$-algebra is therefore given by $\al_g(f)(\tilde{x}) = f( \tilde{x} \cd g)$.
 
Let us turn to the description of the Baum-Connes assembly map 
\[
\mu_{\tilde{X}} : \KK[*][G]{C_0(\tilde{X})}{\Cc} \to \KK[*][]{\Cc}{C^*_r(G)} 
\]
associated to $p : \tilde{X} \to X$. The left-hand side is the $G$-equivariant $K$-homology of the $G$-space $\tilde{X}$ whereas the right-hand side is the $K$-theory of the reduced group $C^*$-algebra $C_r^*(G)$.

First of all we recall the construction of Rieffel's imprimitivity bimodule. We merely sketch the proof, mostly to set up notational conventions, and refer the reader to \cite{rieffel:morita,rieffel:proper} for more details (and more general results).

\begin{proposition}\label{prop:rieffelmorita}
There exists a $C^*$-correspondence $Y$, implementing a strong Morita equivalence between the reduced crossed product $C_0(\tilde{X})\rtimes_r G$ and $C(X)$.
\end{proposition}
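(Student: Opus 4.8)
The plan is to obtain $Y$ as the completion of $C_c(\tilde X)$ for a suitable $C(X)$-valued inner product, reproducing Rieffel's construction \cite{rieffel:morita,rieffel:proper}. Set $Z := C_c(\tilde X)$ and $A := C_0(\tilde X)\rtimes_r G$, and give $Z$ the right $C(X)$-module structure $(\xi\cdot f)(\tilde x) = \xi(\tilde x)\,f(p(\tilde x))$ together with the left module structure over the dense $*$-subalgebra $C_c(G,C_0(\tilde X))\subseteq A$ defined by $(F\cdot\xi)(\tilde x) = \sum_{g\in G} F(g)(\tilde x)\,\xi(\tilde x\cdot g)$. The two bimodule inner products are
\[
\langle\xi,\eta\rangle_{C(X)}\big(p(\tilde x)\big) := \sum_{g\in G}\overline{\xi(\tilde x\cdot g)}\,\eta(\tilde x\cdot g),\qquad {}_A\langle\xi,\eta\rangle(g)(\tilde x) := \xi(\tilde x)\,\overline{\eta(\tilde x\cdot g)}.
\]
First I would check that these are well defined, which is where properness of the $G$-action is used: for every compact $L\subseteq\tilde X$ the set $\{g\in G : L\cdot g\cap L\neq\varnothing\}$ is finite, so all series over $G$ above are locally finite; hence $\langle\xi,\eta\rangle_{C(X)}$ is a continuous $G$-invariant function on $\tilde X$ and thus descends to a well-defined element of $C(X)$ (independent of the lift $\tilde x\in p^{-1}(x)$), while ${}_A\langle\xi,\eta\rangle$ has finite support in $G$ with each slice of compact support, so it lies in $C_c(G,C_0(\tilde X))$.

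The next step is the purely algebraic verification that $Z$ is a pre-imprimitivity bimodule. Using $\alpha_g(f)(\tilde x) = f(\tilde x\cdot g)$ and $p(\tilde x\cdot g) = p(\tilde x)$, a direct reindexing of the defining sums establishes: conjugate symmetry of both inner products, where for the left one this is with respect to the involution $F^*(g) = \alpha_g\big(\overline{F(g^{-1})}\big)$ of the crossed product; right $C(X)$-linearity of $\langle\cdot,\cdot\rangle_{C(X)}$ in the second variable and left $A$-linearity of ${}_A\langle\cdot,\cdot\rangle$ in the first; the adjunction relations $\langle F\cdot\xi,\eta\rangle_{C(X)} = \langle\xi,F^*\cdot\eta\rangle_{C(X)}$ and ${}_A\langle\xi\cdot f,\eta\rangle = {}_A\langle\xi,\eta\cdot\overline f\rangle$; and the key compatibility identity $\xi\cdot\langle\eta,\zeta\rangle_{C(X)} = {}_A\langle\xi,\eta\rangle\cdot\zeta$. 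None of this is difficult; it is exactly the routine bookkeeping alluded to when we say we merely sketch the proof.

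What genuinely needs an argument is positivity and fullness. Positivity of $\langle\xi,\xi\rangle_{C(X)} = \sum_g|\xi(\tilde x\cdot g)|^2$ is evident, and fullness on the $C(X)$-side is easy: by compactness of $X$ one finds, patching local sections over a finite trivializing cover, a compact $K\subseteq\tilde X$ with $p(K) = X$ and then a nonnegative $\chi\in C_c(\tilde X)$ with $\chi>0$ on $K$; then $\langle\chi,\chi\rangle_{C(X)}$ is bounded below by a positive constant, hence invertible, so the self-adjoint two-sided ideal $\langle Z,Z\rangle_{C(X)}$ equals $C(X)$. The delicate point, which I expect to be the main obstacle and is the only non-formal step, is positivity of ${}_A\langle\xi,\xi\rangle$ in the \emph{reduced} crossed product. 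The clean way to get it is to let $\mathsf E$ denote the completion of $Z$ for $\langle\cdot,\cdot\rangle_{C(X)}$: the algebra $C_c(G,C_0(\tilde X))$ acts on $\mathsf E$ by adjointable operators, and by the compatibility identity the element ${}_A\langle\xi,\eta\rangle$ acts as the operator $\zeta\mapsto\xi\cdot\langle\eta,\zeta\rangle_{C(X)}$; the nontrivial input — exactly where properness and the choice of the reduced (rather than full) crossed product enter — is that this action extends to a faithful representation of $A = C_0(\tilde X)\rtimes_r G$ on $\mathsf E$, after which ${}_A\langle\xi,\xi\rangle$ becomes the positive operator $\zeta\mapsto\xi\cdot\langle\xi,\zeta\rangle_{C(X)}$ and is therefore positive in $A$. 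Fullness on the $A$-side then follows from a partition-of-unity computation on $X$, approximating $f\cdot\delta_g$ (for $f\in C_c(\tilde X)$) by inner products ${}_A\langle\xi,\eta\rangle$ with $\xi$ of small support. Completing $Z$ in the norm $\xi\mapsto\|\langle\xi,\xi\rangle_{C(X)}\|^{1/2}$ produces the $C^*$-correspondence $Y$, and Rieffel's theorem \cite{rieffel:morita} yields the asserted strong Morita equivalence; all details suppressed here are in \cite{rieffel:proper}.
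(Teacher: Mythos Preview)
Your proposal is correct and follows essentially the same approach as the paper: both build $Y$ as the completion of $C_c(\tilde X)$ with exactly the same right $C(X)$-action, $C(X)$-valued inner product, left crossed-product action, and left inner product (your ${}_A\langle\xi,\eta\rangle(g)=\xi\,\alpha_g(\bar\eta)$ is the paper's formula $\Phi(\Theta_{\xi,\eta})(g)=\xi\alpha_g(\bar\eta)$ rephrased), and both defer the nontrivial verifications to Rieffel's papers. You simply spell out more of the routine checks (well-definedness, positivity, fullness) than the paper's sketch does, but there is no difference in strategy.
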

\begin{proof}[Sketch of proof]
The $C^*$-correspondence $Y$ is defined as a completion of $C_c(\tilde{X})$. The unital $C^*$-algebra $C(X)$ acts from the right as bounded continuous functions using the pullback along the quotient map $p : \tilde{X} \to X$. The full $C(X)$-valued inner product on $C_c(\tilde{X})$ is defined as 
\begin{equation}\label{eq:innprodL}
\bra{\xi}\ket{\zeta}(x) =\sum_{p(y) = x} (\bar{\xi} \cd \zeta)(y),
\end{equation}
where $\xi, \zeta \in C_c(\tilde{X})$ and $x \in X$. Define $Y$ to be the completion of $C_c(\tilde{X})$ with respect to the induced norm. The left action on $Y$ of the reduced crossed product is given by
\begin{equation}\label{eq:leftactR}
f \cd \xi =\sum_{g\in G} f(g)\alpha_g(\xi),
\end{equation}
where $f\in C_c(G,C_0(\tilde{X}))$ and $\xi\in C_c(\tilde{X})$. The assignment
\begin{equation}\label{eq:innprodR}
\Phi(\Theta_{\xi,\zeta})(g)=\xi\alpha_g(\bar{\zeta})
\end{equation}
mapping from rank-one operators (i.e., $\Theta_{\xi,\zeta}(\upsilon)=\xi\bra{\zeta}\ket{\upsilon}$ with $\xi,\zeta \in C_c( \tilde{X})$) to $C_c(G,C_0(\tilde{X}))$, extends to a $\ast$-isomorphism $\Phi \colon \mathcal{K}(Y)\to C_0(\tilde{X})\rtimes_r G$ from the compact operators of the Hilbert $C^*$-module $Y$ to the reduced crossed product.
\end{proof}

\begin{remark}\label{r:cross}
It follows from \cite[Proposition 2.2]{claire:amenexact} that our action of $G$ is amenable and then by \cite[Theorem 5.3]{claire:amenexact} that the \emph{full} crossed product $C_0(\tilde{X})\rtimes G$ is isomorphic to the \emph{reduced} crossed product $C_0(\tilde{X})\rtimes_r G$. In particular, any covariant pair of representations for $C_0(\tilde{X})$ and $G$ gives rise to a representation of the (reduced) crossed product $C_0(\tilde{X})\rtimes_r G$, namely the integrated form.
\end{remark}


The Baum-Connes assembly map $\mu_{\tilde{X}}$ is defined as the composition of the following maps: 
\[
\xymatrix{\KK[*][G]{C_0(\tilde{X})}{\Cc)} \ar[r]^-{\jmath^G_r} & \KK{C_0(\tilde{X})\rtimes_r G}{C^*_r(G)} \ar[d]^-{[Y^*]\hot_{C_0(\tilde{X})\rtimes_r G}-}\\
& \KK{C(X)}{C^*_r(G)} \ar[r]^-{\iota^*} & \KK{\Cc}{C^*_r(G)} , }
\]
where we specify that
\begin{itemize}
\item the upper horizontal map is the reduced version of Kasparov's descent homomorphism (\cite[page 172]{kas:descent});
\item the vertical map is given by interior Kasparov product with the class $[Y^*]\in \KK[0][]{C(X)}{C_0(\tilde{X})\rtimes_r G}$, induced by the dual of Rieffel's imprimitivity bimodule;
\item the lower horizontal map is the pullback along the inclusion $\iota\colon\Cc\hookrightarrow C(X)$.
\end{itemize}

\begin{remark}
The Baum-Connes assembly map is defined more generally for proper actions of $G$ with compact quotient \cite{BCH:class}. In this paper we focus on free and proper actions since we are interested in the link to the Miščenko-Fomenko index map and the Kadison-Kaplansky idempotent conjecture. 
\end{remark}

We now turn to the description of the Miščenko-Fomenko index map 
\[
\eta_{\tilde{X}} : \KK{C(X)}{\Cc} \to \KK{\Cc}{C^*_r(G)}.
\]
This homomorphism is defined as the composition of the following maps:
\[
\xymatrix{\KK{C(X)}{\Cc} \ar[r]^-{\tau_{C^*_r(G)}} & \KK{C(X)\,\hot\, C^*_r(G)}{C^*_r(G)} \ar[d]^-{[M]\hot_{C(X)\hatotimes C^*_r(G)}-}\\ & \KK{\Cc}{C^*_r(G)} .} 
\]
We specify that the homomorphism $\tau_{C^*_r(G)}$ is defined on Kasparov modules as
\[
(E, F) \mapsto (E\hot C^*_r(G), F\hot 1),
\]
using the exterior tensor product of $C^*$-correspondences, see \cite[Section 17.8.5]{black:kth}. The second homomorphism is given by interior Kasparov product with a class $[M]\in \KK[0][]{\Cc}{C(X)\,\hot\, C^*_r(G)}$, induced by a finitely generated projective Hilbert $C^*$-module $M$. More precisely, $M$ can be identified with the module of sections associated to the \emph{Miščenko line bundle}, i.e., the hermitian bundle of $C^*$-algebras obtained from the associated bundle construction 
\[
\tilde{X}\times_G C^*_r(G)\to X,
\]
where $G$ acts diagonally, acting on the reduced group $C^*$-algebra via the left regular representation \cite{mf:bundle}.

The link between the Baum-Connes assembly map and the Miščenko-Fomenko index map is furnished by the \emph{dual Green-Julg isomorphism}:
\[
J_{\tilde{X}} : \KK[*][G]{C_0(\tilde{X})}{\Cc} \to \KK{C(X)}{\Cc},
\]
which we will describe in details in Section \ref{s:thmA}.

Let us recall that any $G$-$C^*$-correspondence $E$ between two $G$-$C^*$-algebras $A$ and $B$ gives rise to a reduced crossed product $E \rtimes_r G$, which is then a $C^*$-correspondence between the corresponding reduced crossed product $C^*$-algebras, thus from $A \rtimes_r G$ to $B \rtimes_r G$, see \cite[page 170-171]{kas:descent}, \cite{com:cross}, and Section \ref{s:thmA} for more details.

Our first result is the following:
\begin{theoremA}\label{thm:tprdM}
There exists a $G$-$C^*$-correspondence $Z$ from $C_0(\tilde{X})$ to $C(X)$, inducing a class $[Z]\in \KK[0][G]{C_0(\tilde{X})}{C(X)}$, such that the inverse of the dual Green-Julg isomorphism is given by
\[
J_{\tilde{X}}^{-1}=[Z]\hot_{C(X)}- \colon \KK{C(X)}{\Cc}\to \KK[*][G]{C_0(\tilde{X})}{\Cc}.
\]
Moreover, there is an isomorphism of Hilbert $C^*$-modules over $C(X) \hot C_r^*(G)$:
\[
M\cong Y^*\,\hot_{C_0(\tilde{X})\rtimes_r G}\, Z\rtimes_r G.
\]
In particular, we have the $\KKK$-theoretic identity
\[
[M]=\iota^*[Y^*]\,\hot_{C_0(\tilde{X})\rtimes_r G}\,\jmath^G_r[Z]\in \KK[0][]{\Cc}{C(X)\,\hot\, C^*_r(G)}.
\]
\end{theoremA}
\begin{corollary}\label{cor:cdiag}
The following diagram is commutative:
\begin{equation}\label{eq:diag}
\xymatrix{\KK[*][G]{C_0(\tilde{X})}{\Cc} \ar[d]^-{J_{\tilde{X}}} \ar[r]^-{\mu_{\tilde{X}}} & \KK[*][]{\Cc}{C^*_r(G)}\\
\KK[*]{C(X)}{\Cc} . \ar[ur]^-{\eta_{\tilde{X}}}} 
\end{equation}
\end{corollary}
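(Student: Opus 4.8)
The plan is to deduce Corollary~\ref{cor:cdiag} directly from Theorem~\ref{thm:tprdM} by chasing the definitions of the two assembly maps through the various $\KKK$-theoretic products. Concretely, fix a class $x \in \KK[*][G]{C_0(\tilde X)}{\Cc}$. Along the top and right of the diagram we must compute $\mu_{\tilde X}(x) = \iota^*\big([Y^*] \hot_{C_0(\tilde X)\rtimes_r G} \jmath^G_r(x)\big)$. Along the left and the diagonal we must compute $\eta_{\tilde X}(J_{\tilde X}(x))$, which by definition of $\eta_{\tilde X}$ equals $[M] \hot_{C(X)\hot C^*_r(G)} \tau_{C^*_r(G)}\big(J_{\tilde X}(x)\big)$. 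So the corollary amounts to the identity
\[
\iota^*\big([Y^*] \hot_{C_0(\tilde X)\rtimes_r G} \jmath^G_r(x)\big) = [M] \hot_{C(X)\hot C^*_r(G)} \tau_{C^*_r(G)}\big(J_{\tilde X}(x)\big)
\]
in $\KK[*][]{\Cc}{C^*_r(G)}$, and the whole point of Theorem~\ref{thm:tprdM} is that it supplies a factorization of $[M]$ that makes this transparent.

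The key steps, in order, are as follows. First, substitute the identity $[M] = \iota^*[Y^*] \hot_{C_0(\tilde X)\rtimes_r G} \jmath^G_r[Z]$ from Theorem~\ref{thm:tprdM} into the right-hand side; the associativity of the Kasparov product then reduces matters to showing $\jmath^G_r[Z] \hot_{C(X)\hot C^*_r(G)} \tau_{C^*_r(G)}(J_{\tilde X}(x)) = \jmath^G_r(x)$ after an appropriate identification of the middle algebras (here one uses that $\iota^*$ commutes with taking Kasparov products on the other leg, i.e. functoriality of $\KKK$ in the coefficient pullback). Second, use the other half of Theorem~\ref{thm:tprdM}, namely $J_{\tilde X}^{-1} = [Z] \hot_{C(X)} -$, to rewrite $J_{\tilde X}(x)$ as the unique class $y$ with $[Z] \hot_{C(X)} y = x$; equivalently, apply $J_{\tilde X}^{-1}$ to both sides so that it suffices to check $[Z] \hot_{C(X)} \big(\jmath^G_r[Z] \hot \tau_{C^*_r(G)}(\cdot)\big)^{\vee} $-type compatibilities at the level of the descent homomorphism. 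Third — and this is the genuine content — verify that Kasparov's descent $\jmath^G_r$ is compatible with exterior tensoring by $C^*_r(G)$ in the sense that for a $G$-equivariant class the triple Kasparov product $\jmath^G_r[Z] \hot (\tau_{C^*_r(G)} \circ J_{\tilde X})$ recovers $\jmath^G_r$ applied to the original class; this is where one must unwind the definition of $\tau_{C^*_r(G)}$ as exterior product with $C^*_r(G)$ and the definition of the reduced crossed product of a $G$-$C^*$-correspondence, reconciling $(Z \rtimes_r G)$ against $Z \hot C^*_r(G)$ as modules. Finally, collect the identifications and conclude that both composites send $x$ to the same element, for all $x$, hence the diagram commutes.

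The main obstacle I expect is the bookkeeping in the third step: keeping straight the various balanced tensor products over $C_0(\tilde X)\rtimes_r G$, $C(X)$, $C(X)\hot C^*_r(G)$, and $\Cc$, and in particular matching the crossed-product construction $-\rtimes_r G$ applied to a correspondence against the exterior product $- \hot C^*_r(G)$. One has to be careful that the descent homomorphism $\jmath^G_r$ is being applied to $[Z]$ (a $G$-equivariant $C_0(\tilde X)$–$C(X)$ correspondence, with $C(X)$ carrying the trivial $G$-action), so that $\jmath^G_r[Z]$ is a $(C_0(\tilde X)\rtimes_r G)$–$(C(X)\rtimes_r G)$ class, and to identify $C(X) \rtimes_r G \cong C(X) \hot C^*_r(G)$ since the action is trivial — an identification that is compatible with $\iota^*$. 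Modulo these identifications, the corollary is a formal consequence of the product factorization of $[M]$ together with the invertibility statement for $[Z]$, both granted by Theorem~\ref{thm:tprdM}; so the proof is essentially a diagram chase once the reduced-crossed-product-of-a-correspondence formalism (recalled in Section~\ref{s:thmA}) is in place.
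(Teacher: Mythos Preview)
Your approach is correct and is essentially the paper's own argument: write $x=[Z]\hot_{C(X)} y$ via Theorem~\ref{thm:tprdM}, expand $\mu_{\tilde X}(x)$ using the factorization $[M]=\iota^*[Y^*]\hot\jmath^G_r[Z]$, and identify the result with $\eta_{\tilde X}(y)$. One clarification on your ``third step'': the clean way to carry it out is to invoke multiplicativity of descent, $\jmath^G_r([Z]\hot_{C(X)} y)=\jmath^G_r[Z]\hot_{C(X)\hot C^*_r(G)}\jmath^G_r(y)$, together with the observation that for $y\in\KKK_*(C(X),\Cc)$ (trivial $G$-action on both sides) one has $\jmath^G_r(y)=\tau_{C^*_r(G)}(y)$; there is no need to ``reconcile $Z\rtimes_r G$ against $Z\hot C^*_r(G)$'', and indeed these are not isomorphic since $Z$ carries a nontrivial $G$-action.
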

The previous corollary is well-known to experts working on the Baum-Connes conjecture and index theory. It has been proved in \cite{markus:ass} with a different method. 


We now turn to our second result. Let $\phi\colon C^*_r(G)\to \Cc$ denote the canonical tracial state and denote by $\phi_* : K_0(C_r^*(G)) \to \rr$ the induced map on even $K$-theory. Then the homomorphism
\begin{equation}\label{eq:ind}
\phi_*\circ \eta_{\tilde{X}} \colon \KK[0][]{C(X)}{\Cc} \to \rr
\end{equation}
can be interpreted as an index, namely the $L^2$-index of Atiyah \cite{atiyah:ltwo} and, equivalently, the Mi\v{s}\v{c}enko-Fomenko index \cite{mf:bundle}. These identifications are explained in \cite[Theorem 5.15 and Theorem 5.22]{schick:ltwo}. In view of this, we denote the map in \eqref{eq:ind} by
\[
\T{ind}_{C_r^*(G)}\colon \KK[0][]{C(X)}{\Cc} \to \rr.
\]

On the other hand, there is a simple index map
\[
\T{ind} : \KK[0][]{C(X)}{\Cc} \to \zz,
\]
obtained by pairing with the class $[1_{C(X)}]\in K_0(C(X))$, or equivalently by applying the pullback via the unital $*$-homomorphism $\iota\colon\Cc \hookrightarrow C(X)$.

\begin{theoremA}\label{t:gindtriv_intro}
There is an equality of index maps:
\[
\T{ind}_{C_r^*(G)} = \T{ind} : \KK[0][]{C(X)}{\Cc} \to \rr.
\]
In particular, 
\[
\T{ind}_{C_r^*(G)}(x) \in \zz \q \mbox{for all } x \in \KK[0][]{C(X)}{\Cc} .
\]
\end{theoremA}

In fact, we prove a stronger theorem in the context of flat bundles of finitely generated projective Hilbert $C^*$-modules and deduce the above theorem as a special case. We emphasize once more that the space $X$ need not be a CW complex but merely a second-countable, compact Hausdorff space. This provides a generalization of the already known $L^2$-index theorem. 

In \cite{atiyah:ltwo} the previous theorem is proved when $\tilde{X}$ and $X$ are smooth manifolds and without reference to $\KKK$-theory. More precisely, in the smooth setting, the $\KKK$-classes whose index we consider here, are concretely realized in \cite{atiyah:ltwo} as coming from elliptic differential operators, acting on sections of a bundle over $X$, and their lifts to equivariant differential operators acting on the corresponding sections of the pullback bundle. A generalization of Atiyah's $L^2$-index theorem is also proved in \cite{luck:index}, using the universal center-valued trace instead of the standard trace on the group von Neumann algebra. In \cite{schick:ltwo} a generalization of Atiyah's $L^2$-index theorem is provided, which works for flat bundles of finitely generated projective Hilbert $C^*$-modules, but the base space $X$ is still required to be a compact manifold.

The application of the $L^2$-index theorem to the Kadison-Kaplansky idempotent conjecture is based on the following well-known argument \cite[Corollary 6.3.13]{ros:algK}.
\begin{proposition}\label{p:integer}
Let $A$ be a $C^*$-algebra with a unit $1$, and let $\phi$ be a faithful tracial state on $A$. If $\phi_*\colon K_0(A)\to \R$ only takes integer values, then $A$ contains no idempotents other than $0$ and $1$.
\end{proposition}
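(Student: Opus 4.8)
The plan is to reduce the statement to projections and then play the faithfulness of $\phi$ against the normalization $\phi(1)=1$.

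First I would invoke the standard fact (see e.g.\ \cite{black:kth}) that every idempotent in a unital $C^*$-algebra is similar, via an invertible element of the algebra, to a projection, and that similar idempotents define the same class in $K_0$. Concretely, given an idempotent $e\in A$, set $z=ee^*+(1-e)^*(1-e)=1+(e-e^*)(e^*-e)$; one checks that $z\ge 1$, so $z$ is positive and invertible, that $z$ commutes with $e$, and that $p:=ee^*z^{-1}$ is a projection satisfying $pe=e$ and $ep=p$. From $pe=e$ and $ep=p$ it follows that $v:=1-p+e$ is invertible with inverse $1+p-e$ and that $vev^{-1}=p$, so $e$ and $p$ are similar; in particular $[e]=[p]\in K_0(A)$, and since similarity by an invertible element preserves the properties of being zero and being the unit, $e\notin\{0,1\}$ forces $p\notin\{0,1\}$. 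Thus it suffices to rule out the existence of a projection $p\in A$ with $p\ne 0$ and $p\ne 1$.

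Suppose such a $p$ exists. The induced map on $K_0$ satisfies $\phi_*([p])=\phi(p)$. Since $p\ge 0$, $p\ne 0$, and $\phi$ is faithful, we get $\phi(p)>0$; applying the same reasoning to the projection $1-p\ne 0$ gives $\phi(1-p)>0$, i.e.\ $\phi(p)<\phi(1)=1$. Hence $\phi_*([p])=\phi(p)\in(0,1)$ is not an integer, contradicting the hypothesis that $\phi_*$ takes only integer values on $K_0(A)$. Therefore $A$ contains no idempotents other than $0$ and $1$.

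There is no serious obstacle here: the single non-formal input is the similarity of an idempotent to a projection together with the resulting equality of $K_0$-classes, and everything else is immediate from faithfulness and $\phi(1)=1$. (One could instead observe that, $\phi$ being a trace, $\phi_*([e])=\phi(e)$ already for an arbitrary idempotent $e$; but checking that $\phi(e)$ is a real number lying strictly between $0$ and $1$ still goes most cleanly through the associated projection.)
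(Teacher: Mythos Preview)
Your proof is correct and follows essentially the same route as the paper's: pass from the idempotent to a similar projection, use positivity and faithfulness to pin $\phi(p)$ to $[0,1]$, and then invoke the integrality hypothesis. The only difference is presentational---you argue by contradiction and supply the explicit similarity construction, whereas the paper argues directly and simply quotes the existence of a similar projection.
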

\begin{proof}
Let $e\in A$ be an idempotent. There is a projection $p\in A$ which is similar to $e$, in particular $\phi(e)=\phi(p)$. Since $1-p\geq 0$, we have $\phi(1)-\phi(p)\geq 0$, and therefore $0\leq \phi(p)=\phi(e)\leq 1$. Now, if $\phi(p)$ is an integer we must have that $\phi(p) \in \{0,1\}$ and therefore since $\phi$ is faithful and $1 \geq p \geq 0$ we conclude that $p \in \{0,1\}$. Thus, since $e$ is similar to $p$, $e$ is equal to either $0$ or $1$. 
\end{proof}

Suppose for the rest of this section that the group $G$ is torsion-free. In this case, Corollary \ref{cor:cdiag} implies that we have a commutative diagram:
\begin{equation}\label{eq:commufull}
\xymatrix{\RK_*^G(C_0(EG),\Cc) \ar[d]^-{J} \ar[r]^-{\mu} & \KK[*][]{\Cc}{C^*_r(G)}\\
\RK_*(C_0(BG),\Cc) \ar[ur]^-{\eta}} .
\end{equation}
Above, $EG\to BG$ is the universal principal $G$-bundle. It is known that the classifying space for proper actions, usually denoted $\underbar{E}G$, coincides (up to equivariant homotopy) with $EG$ when $G$ is torsion-free, because in this case proper actions are automatically free. The groups on the left are $K$-homology groups \emph{with compact support}, and are defined as
\begin{align*}
\RK_*^G(C_0(EG),\Cc)&=\varinjlim_{\tilde{X}\subseteq EG} \KK[*][G]{C_0(\tilde{X})}{\Cc}\\
\RK_*(C_0(BG),\Cc)&=\varinjlim_{X\subseteq BG} \KK[*][]{C(X)}{\Cc},
\end{align*}
where $\tilde{X}$ ranges over locally compact Hausdorff proper $G$-spaces with compact quotient $X$. The various homomorphisms are induced on the direct limits by their ``localized'' counterparts, namely $J_{\tilde{X}}$, $\mu_{\tilde{X}}$ and $\eta_{\tilde{X}}$. 
\medskip

The mentioned application to the Kadison-Kaplansky conjecture is now an immediate consequence of the commutative diagram in \eqref{eq:commufull}, Theorem \ref{t:gindtriv_intro}, and Proposition \ref{p:integer}.

\begin{corollary}
Suppose that $G$ is a discrete countable torsion-free group such that the assembly map $\mu : \RK_*^G(C_0(EG),\Cc)  \to \KK[*][]{\Cc}{C^*_r(G)}$ is surjective. Then every idempotent $e \in C_r^*(G)$ is either equal to $0$ or $1$.
\end{corollary}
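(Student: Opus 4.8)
The plan is to deduce the corollary by a direct chain of implications from the results already assembled. First I would observe that, by Proposition \ref{p:integer} applied to $A = C_r^*(G)$ with $\phi$ the canonical tracial state, it suffices to show that the induced map $\phi_* \colon K_0(C_r^*(G)) \to \R$ takes only integer values; note $\phi$ is a faithful tracial state on $C_r^*(G)$ precisely because the left regular representation is faithful, so Proposition \ref{p:integer} applies verbatim.

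Next I would show that every class in $K_0(C_r^*(G)) = \KK[0][]{\Cc}{C_r^*(G)}$ lies in the image of the assembly map $\mu$. This is exactly the surjectivity hypothesis: $\mu \colon \RK_0^G(C_0(EG),\Cc) \to K_0(C_r^*(G))$ is onto. Hence any $K_0$-class of $C_r^*(G)$ is of the form $\mu(y)$ for some $y \in \RK_0^G(C_0(EG),\Cc)$, and since $\RK_0^G(C_0(EG),\Cc)$ is the direct limit over the cocompact $\tilde X \subseteq EG$, we may realize $y$ already at a finite stage, i.e. $y$ comes from some class in $\KK[0][G]{C_0(\tilde X)}{\Cc}$ for a fixed locally compact Hausdorff proper (hence free, since $G$ is torsion-free) $G$-space $\tilde X$ with compact quotient $X = \tilde X / G$.

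Now I would invoke the commutative diagram \eqref{eq:commufull} (which rests on Corollary \ref{cor:cdiag}): applying the dual Green--Julg isomorphism $J$, we get $x = J(y) \in \RK_0(C_0(BG),\Cc)$, again realized at the finite stage as a class in $\KK[0][]{C(X)}{\Cc}$, and by commutativity $\mu(y) = \eta(x)$, with $\eta$ the Mi\v{s}\v{c}enko--Fomenko index map $\eta_{\tilde X}$ at that stage. Therefore $\phi_*(\mu(y)) = \phi_*\circ\eta_{\tilde X}(x) = \T{ind}_{C_r^*(G)}(x)$ by the definition of $\T{ind}_{C_r^*(G)}$ in \eqref{eq:ind}. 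Finally, Theorem \ref{t:gindtriv_intro} gives $\T{ind}_{C_r^*(G)}(x) = \T{ind}(x) \in \zz$, since $\T{ind}$ is the pairing with $[1_{C(X)}] \in K_0(C(X))$ and thus integer-valued. Stringing these equalities together, $\phi_*$ of an arbitrary $K_0$-class of $C_r^*(G)$ is an integer, so Proposition \ref{p:integer} finishes the argument.

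The only genuine subtlety — and the one place where one must be slightly careful rather than purely formal — is the passage through the direct limits: one must check that the maps $J$, $\mu$, $\eta$ on $\RK_*$ are compatible with the localized maps $J_{\tilde X}$, $\mu_{\tilde X}$, $\eta_{\tilde X}$ in a way that lets the equality $\T{ind}_{C_r^*(G)} = \T{ind}$, proved at each finite stage in Theorem \ref{t:gindtriv_intro}, be applied after representing the given $K$-homology class at a finite stage. Since every step here is either a hypothesis, a cited theorem, or a formal diagram chase, no new estimates or constructions are needed; the corollary is essentially an assembly of the pieces.
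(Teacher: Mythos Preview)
Your proposal is correct and follows exactly the approach the paper takes: the corollary is presented there as an immediate consequence of the commutative diagram \eqref{eq:commufull}, Theorem \ref{t:gindtriv_intro}, and Proposition \ref{p:integer}, and you have simply spelled out the direct-limit reduction to a finite stage that the paper leaves implicit. The only minor remark is that the finite-stage space $\tilde X$ should be taken second-countable so that Theorem \ref{t:gindtriv_intro} applies, but this is already part of the paper's standing hypotheses.
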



\section{\texorpdfstring{Mi\v{s}\v{c}enko}{Mishchenko} module --- Proof of Theorem A}\label{s:thmA}
Recall that $G$ is assumed to be a discrete countable group acting freely, properly, and cocompactly on a second-countable, locally compact Hausdorff space $\tilde{X}$.

We start this section by taking a closer look at the dual Green-Julg isomorphism
\[
J_{\tilde{X}} : \KKK_*^G\big( C_0(\tilde X), \cc \big) \to \KKK_*(C(X),\cc) .
\]
This map is defined as the composition of two isomorphisms:
\begin{align}\label{eq:psiy*}
 \psi & : \KKK_*^G\big( C_0(\tilde X), \cc \big) \to \KKK_*\big( C_0(\tilde X) \rtimes_r G, \cc \big) \q \T{and} \\\notag
 [Y^*]\hot_{C_0(\tilde X) \rtimes_r G}- & : \KKK_*\big( C_0(\tilde X) \rtimes_r G, \cc \big) \to \KKK_*\big( C(X), \cc \big).
\end{align}

The second of these isomorphisms is given by taking interior Kasparov product with the $C^*$-correspondence $Y^*$ from $C(X)$ to $C_0(\tilde X) \rtimes_r G$. This $C^*$-correspondence provides the Morita equivalence between the $C^*$-algebras $C(X)$ and $C_0(\tilde X) \rtimes_r G$ and the homomorphism $[Y^*] \hot_{C_0(\tilde X) \rtimes_r G} -$ is thus an isomorphism with inverse given by the interior Kasparov product with the $C^*$-correspondence $Y$ from $C_0(\tilde X) \rtimes_r G$ to $C(X)$:
\[
[Y] \hot_{C(X)}- : \KKK_*\big( C(X), \cc \big) \to \KKK_*\big( C_0(\tilde X) \rtimes_r G, \cc \big) ,
\]
see Proposition \ref{prop:rieffelmorita}.

We now explain the first of the two isomorphisms in \eqref{eq:psiy*}. 

Suppose that $\mathcal H$ is a countably generated and non-degenerate $G$-$C^*$-correspondence from $C_0(\tilde X)$ to $\cc$. Thus, $\mathcal H$ is a separable Hilbert space equipped with a unitary $G$-action $U\colon G\to \mathcal U(\mathcal{H})$ and a non-degenerate $G$-equivariant $*$-homomorphism $\pi\colon C_0(\tilde{X})\to B(\mathcal{H})$. The left actions of $C_0(\tilde{X})$ and $U$ combine into a non-degenerate left action of the crossed product $C_0(\tilde{X})\rtimes_r G$ on $\mathcal{H}$ in the following way:
\begin{equation}\label{eq:psi}
\wit{\pi}(f \la_g)\xi=(\pi(f)\circ U(g))\xi,\qquad f\in C_0(\tilde{X}) \, , \, \, \xi\in \mathcal{H} .
\end{equation}
Remark that the $*$-homomorphism $\wit \pi : C_0(\tilde X) \rtimes_r G \to B( \mathcal H)$ is indeed well-defined since the reduced crossed product $C_0(\tilde X) \rtimes_r G$ agrees with the full crossed product $C_0(\tilde X) \rtimes G$ in our setting, see Remark \ref{r:cross}. Hence we get a $C^*$-correspondence $\wit{\mathcal H}$ from $C_0(\tilde X) \rtimes_r G$ to $\cc$.

The isomorphism 
\[
\psi : \KKK_*^G\big(C_0(\tilde{X}),\Cc \big) \to \KKK_*\big(C_0(\tilde{X})\rtimes_r G, \Cc \big)
\]
is now defined by the formula $\psi\big( [ \mathcal H,F] \big) = [\wit{\mathcal H},F]$ (the representation is omitted in this notation for Kasparov modules). We need to explain why $\psi$ is an isomorphism and this is most conveniently done by providing an explicit inverse. 
%

Indeed, suppose on the other hand that $\mathcal K$ is a countably generated and non-degenerate $C^*$-correspondence from $C_0(\tilde X) \rtimes_r G$ to $\cc$. We denote the left action by $\rho : C_0(\tilde X) \rtimes_r G \to B(\mathcal K)$. Since $G$ is a countable discrete group we have the inclusion $i : C_0(\tilde X) \to C_0(\tilde X) \rtimes_r G$ and we thus obtain the non-degenerate left action $\widehat \rho = \rho \ci i : C_0(\tilde X) \to B(\mathcal K)$. Moreover, we obtain a group homomorphism $V : G \to \mathcal U(\mathcal K)$ by defining
\[
V(g)(\xi) = \lim_{n \to \infty} \rho(f_n \la_g)(\xi) , \qquad g \in G \, , \, \, \xi \in \mathcal K
\]
for some countable approximate identity $\{ f_n \}_{n\in\N}$ for the $\si$-unital $C^*$-algebra $C_0(\tilde X)$. This data provides us with a $G$-$C^*$-correspondence $\widehat{\mathcal K}$ from $C_0(\tilde X)$ to $\cc$. 

The inverse
\[
\psi^{-1} : \KKK_*\big(C_0(\tilde{X})\rtimes_r G, \Cc \big) \to \KKK_*^G\big(C_0(\tilde{X}),\Cc \big)
\]
is then defined by the formula $\psi^{-1}\big( [\mathcal K,F] \big) = [\widehat{\mathcal K}, F]$.

We are now going to provide a slightly better description of the inverse
\[
J_{\tilde{X}}^{-1} = \psi^{-1} \ci \big( [Y] \hot_{C(X)} - \big) : \KKK_*\big( C(X),\Cc\big) \to \KKK_*^G\big( C_0(\tilde X), \Cc \big)
\]
to the dual Green-Julg isomorphism. As above, using that the left action of $C_0(\tilde X) \rtimes_r G$ on $Y$ is non-degenerate and that the elements in $C(X)$ are $G$-invariant, we obtain a non-degenerate $G$-$C^*$-correspondence
\[
Z = \widehat{Y}
\]
from $C_0(\tilde X)$ to $C(X)$. In fact, recalling that $Y$ is obtained as a completion of $C_c(\tilde X)$, we see that the left action of $C_0(\tilde X)$ on $Z$ comes from the multiplication operation in $C_0(\tilde X)$ and that the $G$-action on $Z$ is induced by
\begin{equation}\label{eq:action}
V(g)(\xi)(x)=\xi(x \cd g), \qquad g \in G \, , \, \, \xi \in C_c(\tilde{X}) \, , \, \, x \in \tilde X .
\end{equation}
When considered as right Hilbert $C^*$-modules over $C(X)$, $Z$ and $Y$ agree. Remark in particular that $C_0(\tilde X)$ acts as compact operators on $Z$ so that $Z$ determines a class $[Z] = [Z,0] \in \KKK_0^G( C_0(\tilde X), C(X))$.

\begin{proposition}\label{p:invjulg}
We have the formula
\[
J_{\tilde{X}}^{-1} = [Z] \hot_{C(X)} - : \KKK_*\big( C(X), \Cc\big) \to \KKK_*^G\big( C_0(\tilde X), \Cc \big)
\]
for the inverse to the dual Green-Julg isomorphism 
\[
J_{\tilde{X}} : \KKK_*^G\big( C_0(\tilde X), \Cc \big)  \to \KKK_*\big( C(X), \Cc\big) .
\]
\end{proposition}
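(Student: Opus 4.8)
The plan is to unwind both sides of the claimed identity $J_{\tilde X}^{-1} = [Z] \hot_{C(X)} -$ and reduce the statement to the associativity of the Kasparov product. Recall that by definition $J_{\tilde X}^{-1} = \psi^{-1} \ci \big( [Y] \hot_{C(X)} - \big)$, so what must be shown is that for every Kasparov module $(\mathcal E, F)$ representing a class in $\KKK_*(C(X),\cc)$ one has $[Z] \hot_{C(X)} [\mathcal E, F] = \psi^{-1}\big( [Y] \hot_{C(X)} [\mathcal E,F]\big)$ in $\KKK_*^G(C_0(\tilde X),\cc)$. Applying the isomorphism $\psi$ to both sides, this is equivalent to the cleaner identity
\[
\psi\big( [Z] \hot_{C(X)} [\mathcal E,F]\big) = [Y] \hot_{C(X)} [\mathcal E,F] \in \KKK_*\big( C_0(\tilde X) \rtimes_r G, \cc\big).
\]
So the first step is to reformulate the proposition entirely on the crossed-product side, where it becomes a comparison of two external-type products of Kasparov modules.

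Second, I would compute the left-hand side. Since $Z = \widehat Y$ agrees with $Y$ as a right Hilbert $C(X)$-module, the interior product $Z \hot_{C(X)} \mathcal E$ is, as a Hilbert space, the same as $Y \hot_{C(X)} \mathcal E$; what differs is that $Z \hot_{C(X)} \mathcal E$ carries the $G$-action $V \hot 1$ and the left $C_0(\tilde X)$-action coming from multiplication on $Z$, and these assemble under $\psi$ into a left action of $C_0(\tilde X) \rtimes_r G$ defined by \eqref{eq:psi}. I would then check that this integrated action is precisely the left $C_0(\tilde X) \rtimes_r G$-action obtained on $Y \hot_{C(X)} \mathcal E$ from the original left action of $C_0(\tilde X) \rtimes_r G$ on $Y$ (via \eqref{eq:leftactR}) tensored with the identity. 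This is the heart of the matter: one must verify, on the dense subspace spanned by elementary tensors $\xi \ci \zeta$ with $\xi \in C_c(\tilde X)$, that $\wit\pi$ applied to a generator $f\la_g$ reproduces the formula $f \cd \xi = \sum_g f(g)\alpha_g(\xi)$ after passing to the integrated form, using the explicit description \eqref{eq:action} of $V$ and the compatibility of the approximate identity limit defining $V$ with the left $C_0(\tilde X) \rtimes_r G$-module structure of $Y$. The operator $F$ is untouched on both sides (it acts as $1 \hot F$), and the grading and compactness conditions are inherited, so once the module structures are matched the two Kasparov modules are literally equal, not merely homotopic.

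Third, and this is where I expect the only real subtlety, I must make sure the $G$-equivariance and non-degeneracy bookkeeping is consistent: the construction $\mathcal K \mapsto \widehat{\mathcal K}$ in the definition of $\psi^{-1}$ recovers $V(g)$ as a strict limit $\lim_n \rho(f_n \la_g)$, and I need this to be compatible with forming the interior product before versus after applying $\psi^{-1}$. Concretely, the potential obstacle is commuting the approximate-identity limit past the balanced tensor product over $C(X)$ — one wants $\big(\lim_n \rho(f_n\la_g)\big) \hot 1 = \lim_n \big(\rho(f_n \la_g) \hot 1\big)$ in the appropriate strict/strong sense on $Y \hot_{C(X)} \mathcal E$. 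This follows from non-degeneracy of the left $C_0(\tilde X)$-action on $Y$ together with the fact that $\{f_n\}$ is an approximate identity, but it deserves a careful argument since strict limits do not automatically survive tensoring. Once this compatibility is in hand, the two descriptions of the $G$-$C^*$-correspondence coincide on a dense submodule and hence everywhere, completing the proof. I would also remark that, alternatively, the whole proposition can be deduced from the functoriality of Kasparov descent together with Theorem \ref{thm:tprdM}, but the direct module-theoretic verification sketched above is the more elementary route and is what I would present.
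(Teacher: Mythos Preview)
Your approach is essentially the paper's: both reduce to verifying that the $\widehat{\,\cdot\,}$ construction commutes with the interior tensor product over $C(X)$, i.e., that $Z \hot_{C(X)} \mathcal{E} = \widehat{Y} \hot_{C(X)} \mathcal{E}$ agrees with $\widehat{Y \hot_{C(X)} \mathcal{E}}$ as a $G$-$C^*$-correspondence, with the same operator on both sides. Two minor remarks: the operator on the product should be described as an $F$-connection rather than literally ``$1 \hot F$'', which need not be well-defined on an interior tensor product when $F$ fails to commute with the $C(X)$-action; and your proposed alternative route via Theorem~\ref{thm:tprdM} is circular, since the first assertion of that theorem is exactly this proposition.
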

\begin{proof}
This follows immediately by noting that
\[
Z \hot_{C(X)} \mathcal H = \widehat{Y} \hot_{C(X)} \mathcal H = \reallywidehat{ Y \hot_{C(X)} \mathcal{H}},
\]
whenever $\mathcal H$ is a ($\zz/2\zz$-graded) countably generated $C^*$-correspondence from $C(X)$ to $\Cc$. Indeed, for a Kasparov module $(\mathcal H,F_2)$ from $C(X)$ to $\Cc$, the interior Kasparov product
\[
[Y,0] \hot_{C(X)} [ \mathcal H, F_2] \in \KKK_*( C_0(\tilde X) \rtimes_r G,\Cc)
\]
is represented by any Kasparov module $( Y \hot_{C(X)} \mathcal H, F)$ from $C_0(\tilde X) \rtimes_r G$ to $\Cc$, where $F$ is an $F_2$-connection \cite[Definition 18.3.1]{black:kth}. We thus have that
\[
J_{\tilde{X}}^{-1}\big( [\mathcal H, F_2] \big) = \big[ \reallywidehat{Y \hot_{C(X)} \mathcal H}, F\big]
= \big[ Z \hot_{C(X)} \mathcal H, F\big] .
\]
But the $G$-equivariant Kasparov module $(Z \hot_{C(X)} \mathcal H, F)$ from $C_0(\tilde X)$ to $\Cc$ clearly represents the $G$-equivariant interior Kasparov product $[Z,0] \hot_{C(X)} [ \mathcal H,F_2]$ since $F$ is still an $F_2$-connection.
\end{proof}

The proposition above establishes the first half of Theorem \ref{thm:tprdM}. In order to proceed with the second half, we need a more concrete description of the module of sections associated to the Miščenko line bundle. To this end, we use a proposition found in \cite[page 102]{connes:ncg}. We present the details here since they are omitted in \cite{connes:ncg}. 

Let us choose a finite open cover $\{ V_i \}_{i = 1}^N$ of $X$ together with a local trivialization $\phi_i : p^{-1}(V_i) \to V_i \ti G$ for each $i \in \{1,2,\ldots,N\}$. The transition map $\phi_i \ci \phi_j^{-1}$ can then be identified with a continuous map $g_{ij} : V_i \cap V_j \to G$ for each $i,j \in \{1,2,\ldots,N\}$. Notice that since $G$ is discrete each $g_{ij}$ is in fact locally constant and we may thus make sense of the element $\la_{g_{ij}} \in C( V_i \cap V_j, C_r^*(G))$.

\begin{proposition}\label{prop:ConnesM}
The Miščenko module $M$ is the finitely generated projective Hilbert $C^*$-module, described as the completion of $C_c(\tilde{X})$ with respect to the norm induced by the following $C(X)\,\hot \,C^*_r(G)$-valued inner product:
\begin{equation}\label{eq:con1}
\bra{\xi}\ket{\zeta}(t)(x) = \sum_{p(y)=x} \bar{\xi}(y)\zeta(y \cd t),
%
\end{equation}
where $\xi,\zeta\in C_c(\tilde{X})$, $t \in G$, $x \in X$ and $p\colon \tilde{X}\to X$ is the quotient map. The right action of $C(X)\,\hot\, C^*_r(G)$ on $M$ is defined by
\begin{equation}\label{eq:con2}
(\xi \cd f)(y)=\sum_{g \in G} f(g)(p(y)) \cd \xi(y \cd g^{-1}),
\end{equation}
where $\xi\in C_c(\tilde{X})$, $f\in C_c(G, C(X))$ and $y \in \tilde{X}$.
\end{proposition}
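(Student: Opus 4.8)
The plan is to verify the two assertions in Proposition~\ref{prop:ConnesM} separately: first that the formula \eqref{eq:con1} defines a genuine $C(X)\hot C^*_r(G)$-valued inner product whose completion yields a Hilbert $C^*$-module (with the stated right action \eqref{eq:con2}), and second that this module is finitely generated and projective and is isomorphic to the module of sections of $\tilde X\times_G C^*_r(G)\to X$. For the first point I would proceed by a partition-of-unity/local-triviality argument. Using the finite open cover $\{V_i\}$ and the trivializations $\phi_i : p^{-1}(V_i)\to V_i\times G$, one reduces all the algebraic identities (positivity, $C(X)\hot C^*_r(G)$-linearity in one variable, the compatibility $\langle\xi|\zeta\cd f\rangle = \langle\xi|\zeta\rangle f$, and $\langle\xi|\xi\rangle=0\Rightarrow\xi=0$) to the model case $\tilde X = V\times G$, $p = \mathrm{pr}_V$, where \eqref{eq:con1} collapses to $\langle\xi|\zeta\rangle(t)(x)=\sum_{s\in G}\overline{\xi(x,s)}\,\zeta(x,st)$, which is visibly the $C(V)\hot C^*_r(G)$-valued inner product on $C(V)\hot\ell^2(G)$ (i.e.\ $C(V)$ tensor the standard module $C^*_r(G)$ sitting inside $\ell^2(G)$). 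Positivity then follows from the positivity of the regular representation, and faithfulness of $\langle-|-\rangle$ follows because $\langle\xi|\xi\rangle(e)(x)=\sum_s|\xi(x,s)|^2$ already detects $\xi$.

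Next I would exhibit the explicit isomorphism with the associated-bundle section module. Sections of $\tilde X\times_G C^*_r(G)$ over $X$ are, by definition, continuous maps $\tilde X\to C^*_r(G)$ that are $G$-equivariant in the sense $s(y\cd g)=\lambda_g^{-1}s(y)$ (with the left regular representation), and the Hilbert $C^*$-module structure is the pointwise one over $C(X)\hot C^*_r(G)$. I would define a map $C_c(\tilde X)\to\Gamma$ by $\xi\mapsto s_\xi$ with $s_\xi(y)=\sum_{g\in G}\overline{\xi(y\cd g)}\,\lambda_g$ (the sum being locally finite by properness of the $G$-action), check $G$-equivariance, check that it intertwines \eqref{eq:con1}--\eqref{eq:con2} with the section-module structure, and verify that it extends to an isometric isomorphism onto $\Gamma$; density of the image follows again from a local-triviality computation, since over each $V_i$ the section module is $C(V_i)\hot C^*_r(G)$, a free module. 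Finally, finite generation and projectivity is the standard fact that the section module of any locally trivial bundle of $C^*$-algebras over a compact space with fibre a unital $C^*$-algebra is finitely generated projective: one produces a partition of unity $\{\chi_i^2\}$ subordinate to $\{V_i\}$, uses the trivializations to build a finite generating set $\{\chi_i e_k\}$, and the associated matrix of inner products is an idempotent in $M_{Nn}(C(X)\hot C^*_r(G))$ realizing $M$ as a direct summand of the free module $(C(X)\hot C^*_r(G))^{Nn}$.

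I expect the main obstacle to be purely bookkeeping rather than conceptual: tracking the several competing conventions (the right versus left action of $G$ on $\tilde X$, the convention $\alpha_g(f)(\tilde x)=f(\tilde x\cd g)$, the use of the \emph{left} regular representation in the associated-bundle construction, and the direction of the transition functions $g_{ij}$) so that the equivariance condition defining the section module matches the formulas \eqref{eq:con1}--\eqref{eq:con2} \emph{with the stated signs and $g^{-1}$'s}. A secondary technical point is justifying that the various infinite sums over $G$ appearing in \eqref{eq:con1}, \eqref{eq:con2}, and in the definition of $s_\xi$ are all locally finite and yield continuous functions; this is where properness of the action (equivalently, discreteness of $G$ together with local triviality) is used, and it is the reason the cover $\{V_i\}$ is introduced in the first place. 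Once the model case over $V\times G$ is set up carefully, every claim reduces to it by gluing, and no genuinely new estimate is needed.
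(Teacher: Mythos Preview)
Your proposal is correct and complete in outline, but it takes a genuinely different route from the paper. The paper does not verify the Hilbert-module axioms for \eqref{eq:con1}--\eqref{eq:con2} separately, nor does it construct an explicit map $\xi\mapsto s_\xi$ into the section module. Instead it goes straight to the projection: after choosing the partition of unity $\{\chi_i\}$ subordinate to the trivializing cover $\{V_i\}$, it defines explicit elements $\sqrt{\rho_i}\in C_c(\tilde X)$ (supported on the ``identity slice'' of $p^{-1}(V_i)$), checks by a one-line computation that $\xi=\sum_i\sqrt{\rho_i}\,\langle\sqrt{\rho_i}\,|\,\xi\rangle$ for all $\xi\in C_c(\tilde X)$, and then computes the matrix entries $\langle\sqrt{\rho_i}\,|\,\sqrt{\rho_j}\rangle=\sqrt{\chi_i\chi_j}\,\lambda_{g_{ij}}$. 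This single computation simultaneously exhibits the finite frame (hence finite generation and projectivity) and identifies the resulting projection with the standard cocycle description of the associated bundle $\tilde X\times_G C^*_r(G)$, so both assertions fall out at once. Your approach is more modular and self-contained---it does not presuppose that the reader recognizes the projection $(\sqrt{\chi_i\chi_j}\,\lambda_{g_{ij}})_{ij}$ as the Mi\v{s}\v{c}enko bundle---while the paper's is shorter and yields the explicit projection $p_{C^*_r(G)}$ that is reused later in Section~\ref{sec:flat}. One small bookkeeping point in your sketch: the map $s_\xi(y)=\sum_g\overline{\xi(y\cdot g)}\,\lambda_g$ as written is conjugate-linear in $\xi$, so you will want to drop the complex conjugate (or adjust the target convention) to get a $C^*_r(G)$-linear isomorphism; this is exactly the kind of sign-and-convention issue you already flagged.
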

\begin{proof}
Choose a partition of unity $\{ \chi_i \}_{i = 1}^N$ such that $\T{supp}(\chi_i) \su V_i$ for all $i \in \{1,2,\ldots,N\}$. For each $i \in \{1,2,\ldots,N\}$ we then define the compactly supported continuous function
\[
\rho_i(y) = \fork{ccc}{ (\chi_i \ci p)(y) & \T{for} & p(y) \in V_i \, \, \T{ and } \, \, \, \phi_i(y) = (p(y), e) \\ 0 & & \T{elsewhere} }
\]
on $\tilde{X}$. The following computation shows that the elements $\{ \sqrt{\rho_i}\}_{i = 1}^N$ form a finite frame for $M$, see \cite[Theorem 4.1]{frank:frames}. Indeed, for each $j \in \{1,2,\ldots,N\}$ and $y \in p^{-1}(V_j)$ with $\phi_j(y) = (x, h)$ (for some $x \in V_j$ and $h \in G$) we have that
\[
\begin{split}
\sqrt{\rho_j} \bra{\sqrt{\rho_j}}\ket{\xi} (y) 
& = \sum_{g \in G} \bra{\sqrt{\rho_j}}\ket{\xi}(g)(x) \cd \sqrt{\rho_j}(y \cd g^{-1}) \\
& = \bra{\sqrt{\rho_j}}\ket{\xi}(h)(x) \cd \sqrt{\chi_j}(x)
= \chi_j(x) \xi( y) .
\end{split}
\]
From this we see that $\xi=\sum_{i=1}^N\sqrt{\rho_i}\bra{\sqrt{\rho_i}}\ket{\xi}$ for all $\xi \in C_c(\tilde{X})$.

But then the projection associated to $M$ takes the form $\bigl(p_{C_r^*(G)}\bigr)_{ij}=\bra{\sqrt{\rho_i}}\ket{\sqrt{\rho_j}}$ and we compute
\[
\begin{split}
\bra{\sqrt{\rho_i}}\ket{\sqrt{\rho_j}}(g)(x) & = 
\sqrt{\rho_i}( \phi_i^{-1}(x,e)) \cd \sqrt{\rho_j}( \phi_i^{-1}(x,e) \cd g) \\
& = \fork{ccc}{
\sqrt{\chi_i \cd \chi_j}(x) & \T{for} & g = g_{ij}(x) \\
0 & \T{for} & g \neq g_{ij}(x) },
\end{split}
\]
whenever $x \in V_i \cap V_j$ and $g \in G$. We thus obtain that 
\[
\bra{\sqrt{\rho_i}}\ket{\sqrt{\rho_j}} = \sqrt{\chi_i\chi_j} \cd \la_{g_{ij}}.
\]
It is now clear that the projection $p_{C_r^*(G)}\in M_N\big(C(X,C_r^*(G))\big)$ describes the module of sections of the hermitian bundle of $C^*$-algebras $\tilde{X} \ti_G C_r^*(G) \to X$.
%
\end{proof}

%

We recall that $Z = \widehat{Y}$ is a non-degenerate $G$-$C^*$-correspondence from $C_0(\tilde{X})$ to $C(X)$. Note that the action of $G$ on $C(X)$ is the trival action and the reduced crossed product $Z \rtimes_r G$ is therefore a $C^*$-correspondence from $C_0(\tilde{X}) \rtimes_r G$ to $C(X) \hot C_r^*(G)$. Before proving the second half of Theorem \ref{thm:tprdM} we recall the formulae for the inner product and the left and right actions on the reduced crossed product $Z \rtimes_r G$. The $C(X) \hot C_r^*(G)$-valued inner product is defined by
\begin{equation}\label{eq:inndesc}
\bra{\xi}\ket{\zeta} = \sum_{g\in G}\bra{\xi(g)}\ket{\zeta(gt)},
\end{equation}
where $\xi,\zeta\in C_c(G,Z)$ and $t\in G$. The right action of $C(X) \hot C_r^*(G)$ is determined by the formula
\[
(\xi \cd f)(t)=\sum_{g\in G}\xi(g) \cd f(g^{-1}t),
\]
where $\xi\in C_c(G,Z),f\in C_c(G, C(X))$ and $t \in G$. The left action of $C_0(\tilde{X}) \rtimes_r G$ on $Z\rtimes_r G$ is determined by 
\begin{equation}\label{eq:leftdesc}
(f\cd \xi)(t)= \sum_{g\in G}f(g) \cd V(g)(\xi(g^{-1}t)),
\end{equation}
where $f\in C_c(G,C_0(\tilde{X})), \xi\in C_c(G,Z)$ and $t \in G$.

\begin{proposition}\label{prop:isoM}
The following map extends to an isomorphism of Hilbert $C^*$-modules,
\begin{align}\label{eq:iso}
& Y^*\,\hot_{C_0(\tilde{X})\rtimes_r G}\, (Z\rtimes_r G) \overset{\Phi}{\longrightarrow} M\\\notag
& \bra{\xi} \otimes \zeta \mapsto \sum_{g\in G} V(g^{-1})\big( \ov{\xi} \cd \zeta(g)\big),
\end{align}
where $\xi\in C_c(\tilde{X}),\zeta\in C_c(G, C_c(\tilde{X}))$. 

In particular, we have the $\KKK$-theoretic identity
\[
[M]=\iota^*[Y^*]\,\hot_{C_0(\tilde{X})\rtimes_r G}\,\jmath^G_r[Z]\in \KK[0][]{\Cc}{C(X)\,\hot\, C^*_r(G)}.
\]
\end{proposition}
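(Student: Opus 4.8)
The plan is to verify directly that the map $\Phi$ in \eqref{eq:iso} is a well-defined unitary isomorphism of right Hilbert $C(X)\hot C_r^*(G)$-modules, and then to read off the $\KKK$-theoretic identity as a formal consequence. First I would check that $\Phi$ is right $C(X)\hot C_r^*(G)$-linear on the algebraic level. Using the description of the right action on $Y^*\hot_{C_0(\tilde X)\rtimes_r G}(Z\rtimes_r G)$ — which acts only on the second tensor factor via the formula $(\zeta\cd f)(t)=\sum_g \zeta(g)\cd f(g^{-1}t)$ displayed just before Proposition \ref{prop:isoM} — and the formula \eqref{eq:con2} for the right action on $M$, this is a bookkeeping computation with the $G$-action $V$ from \eqref{eq:action}; the key point is that $V(g^{-1})$ interacts with the $C(X)$-part of $f$ trivially (since $C(X)$-elements are $G$-invariant, $Z$ and $Y$ agreeing as right modules) while the $C_r^*(G)$-part shifts the summation index, matching $y\mapsto y\cd g^{-1}$ in \eqref{eq:con2}.

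The heart of the argument is the inner product computation: I would show
\[
\bra{\bra{\xi_1}\otimes\zeta_1}\ket{\bra{\xi_2}\otimes\zeta_2}_{M}
= \bra{\bra{\xi_1}\otimes\zeta_1}\ket{\bra{\xi_2}\otimes\zeta_2}_{Y^*\hot(Z\rtimes_r G)}
\]
for $\xi_i\in C_c(\tilde X)$, $\zeta_i\in C_c(G,C_c(\tilde X))$. On the left I use \eqref{eq:con1}; on the right I unfold the inner product on an interior tensor product, $\bra{\bra{\xi_1}\otimes\zeta_1}\ket{\bra{\xi_2}\otimes\zeta_2}=\bra{\zeta_1}\ket{\Phi(\Theta_{\xi_1,\xi_2})\cd\zeta_2}_{Z\rtimes_r G}$ using the $\ast$-isomorphism $\Phi\colon\mathcal K(Y)\to C_0(\tilde X)\rtimes_r G$ of \eqref{eq:innprodR}, and then apply the left action formula \eqref{eq:leftdesc} on $Z\rtimes_r G$ together with the crossed-product inner product \eqref{eq:inndesc} and the $C(X)$-valued inner product \eqref{eq:innprodL} on $Z=\widehat Y$. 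Both sides should collapse to the same double sum over $G$ and over the fiber $p^{-1}(x)$ once one substitutes $\bra{\xi_1}\ket{\xi_2}=\bar\xi_1\cd\xi_2$ fiberwise and tracks how the $V(g^{-1})$ in the definition of $\Phi$ converts fiber sums into the shifted sum appearing in \eqref{eq:con1}. This simultaneously proves $\Phi$ is isometric (hence extends to the completions) and injective.

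For surjectivity I would use the finite frame $\{\sqrt{\rho_i}\}_{i=1}^N$ for $M$ exhibited in the proof of Proposition \ref{prop:ConnesM}: each $\rho_i\in C_c(\tilde X)$ lies in the image of $\Phi$ (e.g.\ as $\Phi$ applied to a suitable elementary tensor built from $\rho_i$ and an approximate unit on the $G$-factor), so the image of $\Phi$ is a closed submodule containing a frame and therefore equals $M$. Finally, the $\KKK$-identity is immediate: the class $\iota^*[Y^*]\hot_{C_0(\tilde X)\rtimes_r G}\jmath^G_r[Z]$ is represented by the Kasparov module $(Y^*\hot_{C_0(\tilde X)\rtimes_r G}(Z\rtimes_r G),0)$ with the induced left $\mathbb C$-action — here one uses that $\jmath^G_r[Z]=[Z\rtimes_r G]$ by functoriality of the descent, that the $G$-action on $C(X)$ is trivial so descent introduces the $\hot C_r^*(G)$ factor, and that $C_0(\tilde X)$ acts as compacts throughout so all operators are $0$ — and the isomorphism $\Phi$ identifies this with $(M,0)$, which represents $[M]$. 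The main obstacle I anticipate is purely computational stamina: keeping the three different inner products, the left and right actions on the crossed product, and the $G$-equivariance maps $V$ aligned through the tensor product, especially getting the index-shifts in the two sums to match the asymmetric formula \eqref{eq:con1}; there is no conceptual difficulty once the conventions from Proposition \ref{prop:rieffelmorita} and Proposition \ref{prop:ConnesM} are fixed.
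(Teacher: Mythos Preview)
Your proposal is correct and follows essentially the same route as the paper: the core step in both is the explicit computation showing that $\Phi$ preserves the $C(X)\hot C_r^*(G)$-valued inner products, after which the $\KKK$-identity is read off formally.

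Two small simplifications relative to your plan: first, the separate verification of right $C(X)\hot C_r^*(G)$-linearity is redundant, since a map between pre-Hilbert $C^*$-modules that preserves inner products is automatically a module map (expand $\bra{\Phi(\xi a)-\Phi(\xi)a}\ket{\Phi(\xi a)-\Phi(\xi)a}$). Second, for surjectivity the paper bypasses the frame and shows directly that the image is dense: for any $\varphi\in C_c(\tilde X)$, choose $\psi\in C_c(\tilde X)$ with $\psi\cdot\varphi=\varphi$ (pointwise product), and then $\Phi(\bra{\bar\psi}\otimes \varphi\cdot\lambda_e)=\psi\cdot\varphi=\varphi$. This is cleaner than locating the frame elements in the image via an approximate-unit construction, though your approach also works once you make that step precise.
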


\begin{proof}
The result at the level of $\KKK$-theory follows in a straightforward way from the isomorphism in \eqref{eq:iso}, hence we turn to the proof of this isomorphism.

%
It suffices to check that $\Phi : C_c(\tilde{X})^*  \ot C_c(G, C_c(\tilde{X})) \to M$ (defined on the algebraic tensor product over $\Cc$) has dense image and that it preserves the relevant inner products.

The fact that $\Phi$ has dense image follows since for any $\phi \in C_c(\tilde{X}) \su M$ we may find a $\psi \in C_c(\tilde{X})$ such that $\psi \cd \phi = \phi$ (using the pointwise product here). We then have that
\[
\Phi( \bra{ \ov{\psi}} \ot \phi \cd \la_e) = \psi \cd \phi = \phi .
\]

To check that $\Phi$ preserves the inner products we let $\xi_1, \xi_2 \in C_c(\tilde{X})$ and $\ze_1, \ze_2 \in C_c(G, C_c(\tilde{X}))$ and compute, for each $x \in X$ and $t \in G$,
\[
\begin{split}
& \bra{ \bra{\xi_1} \ot \ze_1}\ket{\bra{\xi_2} \ot \ze_2}(t)(x)
= \sum_{g,h \in G} \bra{\ze_1(g)}\ket{\xi_1 \cd V(h)\big(\ov{\xi_2} \cd \ze_2(h^{-1} g t) \big) }(x) \\
& \q = \sum_{g,h \in G} \sum_{p(y) = x} \big( \ov{\ze_1(g)} \cd \xi_1 \big) (y) \cd 
\big( \ov{\xi_2} \cd \ze_2(h^{-1} g t) \big)(y \cd h) ,
\end{split}
\]
where we are using Equation \eqref{eq:innprodL}, \eqref{eq:innprodR}, \eqref{eq:inndesc} and \eqref{eq:leftdesc}.
On the other hand, we have that
\[
\begin{split}
& \bra{ \Phi( \bra{\xi_1} \ot \ze_1)}\ket{\Phi( \bra{\xi_2} \ot \ze_2 ) }(t)(x) \\
& \q = \sum_{s,r \in G} \bra{ V(s^{-1})(\ov{\xi_1} \cd \ze_1(s))}\ket{V(r^{-1})( \ov{\xi_2} \cd \ze_2(r)) }(t)(x) \\
& \q = \sum_{s,r \in G} \sum_{p(z) = x} (\xi_1 \cd \overline{\ze_1(s)})(z \cd s^{-1}) \cd ( \ov{\xi_2} \cd \ze_2(r))(z \cd t \cd r^{-1}) ,
\end{split}
\]
where we are using Equation \eqref{eq:con1}. After a few changes of variables, we obtain that
\[
\bra{ \bra{\xi_1} \ot \ze_1}\ket{ \bra{\xi_2} \ot \ze_2}(t)(x) 
= \bra{ \Phi( \bra{\xi_1} \ot \ze_1)}\ket{ \Phi( \bra{\xi_2} \ot \ze_2 ) }(t)(x)
\]
and this ends the proof of the proposition.
\end{proof}

\begin{remark}
The previous proposition can be interpreted as a particular case of \cite[Prop. 3.6]{siegfried:module}.
\end{remark}

%

\begin{corollary}
The following diagram is commutative:
\begin{equation}\label{eq:ddiag}
\xymatrix{\KKK_*(C_0(\tilde{X}),\Cc) \ar[d]^-{J_{\tilde{X}}} \ar[r]^-{\mu_{\tilde{X}}} & \KKK_*(\Cc,C^*_r(G))\\
\KKK_*(C(X),\Cc). \ar[ur]^-{\eta_{\tilde{X}}}}
\end{equation}
\end{corollary}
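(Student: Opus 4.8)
The plan is to deduce commutativity of the triangle from Theorem \ref{thm:tprdM}. Since $J_{\tilde{X}}$ is an isomorphism, the statement is equivalent to the identity $\mu_{\tilde{X}}\circ J_{\tilde{X}}^{-1}=\eta_{\tilde{X}}$ on $\KKK_*(C(X),\Cc)$, and this is what I would verify. Fix a class $x\in\KKK_*(C(X),\Cc)$, regarded also as a $G$-equivariant class for the trivial $G$-actions on $C(X)$ and on $\Cc$. By Proposition \ref{p:invjulg} one has $J_{\tilde{X}}^{-1}(x)=[Z]\hot_{C(X)}x$, so unwinding the definition of $\mu_{\tilde{X}}$ the task reduces to checking
\[
\iota^*\Big([Y^*]\hot_{C_0(\tilde{X})\rtimes_r G}\jmath^G_r\big([Z]\hot_{C(X)}x\big)\Big)=[M]\hot_{C(X)\hatotimes C^*_r(G)}\tau_{C^*_r(G)}(x).
\]

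The first step is to pull the reduced descent through the interior Kasparov product. The homomorphism $\jmath^G_r$ is multiplicative with respect to the interior product (\cite{kas:descent}, \cite{com:cross}), and since the $G$-actions on $C(X)$ and $\Cc$ are trivial one has $C(X)\rtimes_r G=C(X)\hatotimes C^*_r(G)$ and $\Cc\rtimes_r G=C^*_r(G)$; hence
\[
\jmath^G_r\big([Z]\hot_{C(X)}x\big)=\jmath^G_r[Z]\hot_{C(X)\hatotimes C^*_r(G)}\jmath^G_r(x).
\]
The second step is to recognize that on a trivially $G$-equivariant class the reduced descent is exactly the exterior tensor product homomorphism: representing $x$ by a Kasparov module $(E,F)$ over $(C(X),\Cc)$ with trivial $G$-action, the reduced crossed product correspondence $E\rtimes_r G$ is canonically isomorphic to $E\hatotimes C^*_r(G)$ with induced operator $F\hot 1$, so that $\jmath^G_r(x)=\tau_{C^*_r(G)}(x)$ in $\KKK_*(C(X)\hatotimes C^*_r(G),C^*_r(G))$.

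The remaining step is formal. Substituting the two identities above and using associativity of the Kasparov product (legitimate here thanks to the standing separability and second-countability hypotheses) to regroup the first two factors, together with the fact that $\iota^*$ is given by Kasparov product on the left with the class of $\iota\colon\Cc\hookrightarrow C(X)$ and therefore commutes with the product over $C(X)\hatotimes C^*_r(G)$, the left-hand side becomes
\[
\big(\iota^*[Y^*]\hot_{C_0(\tilde{X})\rtimes_r G}\jmath^G_r[Z]\big)\hot_{C(X)\hatotimes C^*_r(G)}\tau_{C^*_r(G)}(x).
\]
By the $\KKK$-theoretic identity of Theorem \ref{thm:tprdM}, established in Proposition \ref{prop:isoM}, the parenthesized class is $[M]$, so the whole expression equals $[M]\hot_{C(X)\hatotimes C^*_r(G)}\tau_{C^*_r(G)}(x)=\eta_{\tilde{X}}(x)$, which completes the argument.

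I expect the main obstacle to be the two ``folklore'' facts about the reduced descent invoked in the first two steps --- its compatibility with the interior Kasparov product and its coincidence with $\tau_{C^*_r(G)}$ on inflated classes. Making these precise, in particular keeping careful track of which algebra acts on which side of each correspondence and of the identifications $A\rtimes_r G\cong A\hatotimes C^*_r(G)$ for trivial actions, is where the real care is needed; once they are in place, the conclusion is a routine manipulation of Kasparov products using Theorem \ref{thm:tprdM}.
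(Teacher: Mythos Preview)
Your proposal is correct and follows essentially the same route as the paper's own proof: both invoke Proposition \ref{p:invjulg} to write the inverse Green--Julg map as $[Z]\hot_{C(X)}-$, use multiplicativity of reduced descent together with the identification $\jmath^G_r=\tau_{C^*_r(G)}$ on trivially equivariant classes, and then apply the $\KKK$-identity $[M]=\iota^*[Y^*]\hot_{C_0(\tilde{X})\rtimes_r G}\jmath^G_r[Z]$ from Proposition \ref{prop:isoM}. The only cosmetic difference is that the paper starts from an equivariant class and writes it as $J_{\tilde{X}}^{-1}(y)$, whereas you start from $x\in\KKK_*(C(X),\Cc)$ and verify $\mu_{\tilde{X}}\circ J_{\tilde{X}}^{-1}=\eta_{\tilde{X}}$ directly.
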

\begin{proof}
Suppose $x$ is in $\KK[*][G]{C_0(\tilde{X})}{\Cc}$. Then by Proposition \ref{p:invjulg} there is a $y\in \KK{C(X)}{\Cc}$ with 
\[
x=J_{\tilde{X}}^{-1}(y)=[Z]\,\hot_{C(X)}\, y.
\]
By functoriality of descent \cite[page 172]{kas:descent}, we obtain that
\[
\jmath^G_r(x)=\jmath^G_r([Z]\,\hot_{C(X)}\,y)=\jmath^G_r([Z])\hot_{C(X)\hot C_r^*(G)}\,\jmath^G_r(y).
\]
Note that, since $G$ acts trivially on both $C(X)$ and $\Cc$, we have $C(X)\rtimes_r G\cong C(X)\,\hot\, C^*_r(G)$ and
\[
\jmath^G_r(y)=\tau_{C^*_r(G)}(y).
\]
We thus see that,
\[
\begin{split}
\mu_{\tilde{X}}(x) 
& =\iota^*[Y^*]\hot_{C_0(\tilde{X})\rtimes_r G}\,\jmath^G_r(x) \\ 
& =\iota^*[Y^*]\,\hot_{C_0(\tilde{X})\rtimes_r G}\,\jmath^G_r([Z])\,\hot_{C(X)\,\hot\, C^*_r(G)}\,\tau_{C^*_r(G)}(y).
\end{split}
\]
Applying Proposition \ref{prop:isoM}, the expression above simplifies to
\[
\mu_{\tilde{X}}(x)=[M]\,\hot_{C(X)\hot {C^*_r(G)}}\,\tau_{C^*_r(G)}(y)=\eta_{\tilde{X}}(y).
\]
Hence we have the identity $\mu_{\tilde{X}}(x)=\eta_{\tilde{X}}(J_{\tilde{X}}(x))$ and this proves the corollary.
\end{proof}

%
%
%

\section{Chern characters and flat bundles}

Throughout this section $A$ will be a unital $C^*$-algebra equipped with a faithful tracial state $\phi : A \to \cc$ and $X$ will be a compact Hausdorff space.

We consider the unital $C^*$-algebra $C(X,A) \cong C(X) \hot A$ of continuous $A$-valued maps on $X$.

For every positive integer $n \geq 0$, we will construct an explicit Chern character
\[
\T{Ch}_\phi^{2n} : K_0\big( C(X,A) \big) \to H^{2n}(X,\rr)
\]
with values in the Alexander-Spanier cohomology of $X$. In the case where $A = \cc$, we recover the explicit version of the usual Chern character
\[
\T{Ch}^{2n} : K_0( C(X)) \to H^{2n}(X,\rr)
\]
discovered in \cite{goro:chern}. 

\subsection{Reminders on Alexander-Spanier cohomology }\label{subsec:remASco}

Here is a short summary of how Alexander-Spanier cohomology is defined. For more details, we point the reader to \cite[Chapter 6]{spanier:algtop}.

Let $X$ be a compact Hausdorff space. Let $\T{Cov}(X)$ denote the set of all finite open coverings of $X$, and let $\mathfrak{U}\in \T{Cov}(X)$. For each $k \in \nn \cup \{0\}$, let $\mathfrak{U}^k$ denote the open neighborhood of the diagonal in $X^k$ given by $\cup_{U\in \mathfrak{U}} U^k$, where the superscript $k$ indicates the $k^{\T{th}}$ Cartesian power.  

The real vector space of Alexander-Spanier $k$-cocycles (corresponding to the finite open cover $\mathfrak{U}$) is denoted by $C^k(X, \mathfrak{U})$ and is made of continuous real valued functions on $\mathfrak{U}^{k+1}$. The coboundary map $\partial:C^k(X, \mathfrak{U})\to C^{k+1}(X, \mathfrak{U})$ is defined by the formula  
\begin{equation}\label{eq:deffb}
\partial f(x_0, x_1, \dots, x_{k+1})=\sum_{j=0}^{k+1}(-1)^j f (x_0, \dots, \widehat{x_j}, \dots, x_{k+1}),
\end{equation}
where $f \in C^k(X, \mathfrak{U})$ and the notation $\,\widehat{\cd}\,$ means the term has been omitted.

It can be shown that $\partial^2=0$ and the cohomology of the cochain complex $(C^*(X, \mathfrak{U}), \partial)$ is called the Alexander-Spanier cohomology of the covering $\mathfrak{U}$. It is denoted $H^*(X, \mathfrak{U})$. 

If $\mathfrak{V}$ is a refinement of $\mathfrak{U}$, there is an obvious cochain map (restriction) from $C^*(X, \mathfrak{U})$ to $C^*(X, \mathfrak{V})$, which defines a map $H^*(X, \mathfrak{U}) \to H^*(X, \mathfrak{V})$. 

The (real-valued) Alexander-Spanier cohomology of $X$ is defined as a direct limit over finite open covers:
\[
H^*(X,\rr)=\varinjlim H^*(X, \mathfrak{U}).
\]
These cohomology groups are vector spaces over the real numbers.

\subsection{Construction of the Chern character}
The construction outlined here is entirely based on \cite{goro:chern}. We simply provide a minor generalization of those ideas incorporating the faithful tracial state $\phi : A \to \Cc$.

Let us fix a positive integer $n \geq 0$.
 
Let $p \in M_m( C(X,A))$ be a projection for some positive integer $m \geq 0$ and choose a finite open cover $\G U$ of $X$ such that
\[
\| p( x) - p(x') \| \leq 1/4 \q \forall U \in \G U \, , \, \, x,x' \in U .
\]


We now construct an Alexander-Spanier $2n$-cocycle
\[
\T{Ch}_\phi^{2n}(p) \in C( \G U^{2n+1},\rr) = C^{2n}(X, \G U),
\]
which will represent our Chern character in degree $2n$.

For $n = 0$ we put $\T{Ch}_\phi^0(p)(x) = \phi( p(x))$ for all $x \in X$, where the trace 
\[
\phi : M_m(A) \to \cc
\]
is given by the formula $\phi(a) = \sum_{i = 1}^m \phi(a_{ii})$ for all $a \in M_m(A)$. We remark that $\T{Ch}_\phi^0(p)$ is constant on every $U \in \G U$ since $p(x)$ and $p(x')$ are similar for $x,x' \in U$. In particular, we see that the continuous map $\T{Ch}_\phi^0(p) : X \to \rr$ defines an Alexander-Spanier $0$-cocycle. 

We now consider the case where $n \geq 1$. For every integer $k \geq 1$, we let $\De^k$ denote the $k$-simplex
\[
\De^k = \big\{ (t_1,t_2,\ldots,t_k) \in [0,1]^k \mid \sum_{i = 1}^k t_i \leq 1 \big\} .
\]

Let $x = (x_0, x_1,\ldots,x_{2n}) \in \G U^{2n+1}$. For every $t \in \De^{2n}$, we define
\[
a_p(x,t) = p(x_0) + \sum_{i = 1}^{2n} t_i ( p(x_i) - p(x_0))
\]
and remark that $\| a_p(x,t) - p(x_0) \| \leq 1/4$, in particular we have a well-defined spectral projection
\begin{equation}\label{eq:specproj}
e_p(x,t) = \frac{1}{2 \pi i} \int_{ | \la - 1| = 1/2} ( \la - a_p(x,t))^{-1} \, d\la \in M_m(A),
\end{equation}
where the circle of radius $1/2$ appearing in the formula is given the usual counterclockwise orientation.

The Alexander-Spanier $2n$-cocycle
\[
\T{Ch}_\phi^{2n}(p) \in C( \G U^{2n+1},\rr)
\]
is then defined by the explicit formula
\[
\T{Ch}_\phi^{2n}(p)(x) = \frac{(-1)^n}{ n!} \int_{\De^{2n}} \phi\big( e_p(x,t) d( e_p(x,t) ) \wlw d(e_p(x,t)) \big)
\q x \in \G U^{2n+1}  ,
\]
where the $2n$-simplex $\De^{2n} \subseteq \rr^{2n}$ is given the orientation coming from the form $dt_1 \we dt_2 \wlw dt_{2n}$. Notice that the exterior derivative $d$ appearing in the above expression only differentiates in the direction of the standard simplex $\De^{2n}$.

The proof of the next lemma is almost identical to the proofs of \cite[Lemma~8 and Lemma~9]{goro:chern} and will not be given here.

\begin{lemma}\label{lemma:fchern}
The cochain $\T{Ch}_\phi^{2n}(p) \in C( \G U^{2n+1},\rr)$ is an Alexander-Spanier cocycle and the class $\big[ \T{Ch}_\phi^{2n}(p) \big] \in H^{2n}(X,\rr)$ in Alexander-Spanier cohomology only depends on the class of $p$ in the abelian semigroup $V( C(X,A))$, whose Grothendieck completion gives $K_0(C(X,A))$.
\end{lemma}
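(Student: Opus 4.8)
The plan is to follow the blueprint of \cite[Lemma~8 and Lemma~9]{goro:chern} essentially verbatim, the only new ingredient being the bookkeeping needed to accommodate the tracial state $\phi$ in place of the ordinary matrix trace. First I would prove the cocycle property: one must show $\partial\,\T{Ch}_\phi^{2n}(p)=0$ in $C^{2n+1}(X,\G U)$. The standard argument is a Stokes-type computation. On the prism $\De^{2n+1}$ attached to the $(2n+2)$-tuple $(x_0,\ldots,x_{2n+1})$ one considers the form $\phi\big(e_p\,(de_p)^{\we 2n+1}\big)$ and applies Stokes' theorem; the exact boundary term vanishes because $e_p$ is a projection (so that $\phi(e_p(de_p)^{\we k})$ is closed in the simplex variables, which follows from $e_p=e_p^2$ and $e_p\,de_p\,e_p=0$ together with the trace property $\phi(ab)=\phi(ba)$), while the combinatorial boundary of $\De^{2n+1}$ reproduces exactly the alternating sum defining $\partial$ in \eqref{eq:deffb}. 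The only place where faithfulness or the $C^*$-structure of $A$ plays no role is precisely here — all that is used is that $\phi$ is a trace, which is why Gorokhovsky's computation goes through unchanged.

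Next I would establish well-definedness in cohomology. There are two things to check. First, independence of the choice of finite open cover $\G U$ satisfying the $1/4$-estimate: if $\G V$ refines $\G U$ then the restriction of $\T{Ch}_\phi^{2n}(p)$ computed on $\G U^{2n+1}$ to $\G V^{2n+1}$ agrees on the nose with the cocycle built directly from $\G V$, since the formula for $e_p(x,t)$ and the integrand are purely local; hence the classes agree in the direct limit $H^{2n}(X,\rr)$. Second, homotopy invariance: if $p_0,p_1$ are projections in $M_m(C(X,A))$ that are homotopic through projections, one interpolates and applies the transgression argument of \cite[Lemma~9]{goro:chern}, producing an explicit Alexander-Spanier $(2n-1)$-cochain whose coboundary is the difference $\T{Ch}_\phi^{2n}(p_1)-\T{Ch}_\phi^{2n}(p_0)$ (after passing to a common refinement). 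One then notes that the Murray--von Neumann equivalence relation used to define $V(C(X,A))$ — addition of zeros, unitary conjugation, and homotopy — is respected: invariance under adding a zero block and under conjugation by a unitary are immediate from the trace property, and homotopy invariance is the transgression just described. This shows the class descends to $V(C(X,A))$ and, being additive under direct sums (again clear from the additivity of $\phi$ over block-diagonal matrices), extends to a homomorphism on the Grothendieck group $K_0(C(X,A))$.

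The step I expect to be the genuine (if mild) obstacle is verifying that all the spectral-calculus manipulations — in particular that $e_p(x,t)$ depends smoothly on $t\in\De^{2n}$, that $d$ commutes appropriately with the holomorphic functional calculus in \eqref{eq:specproj}, and that the relation $e_p\,de_p\,e_p=0$ holds — remain valid when the entries live in the noncommutative algebra $M_m(A)$ rather than $M_m(\cc)$. This is handled exactly as in \cite{goro:chern}: one works inside the Banach algebra $M_m(A)$, where the resolvent $(\la-a_p(x,t))^{-1}$ is norm-differentiable in $t$ on the compact contour $|\la-1|=1/2$, so $e_p(x,t)$ is a smooth $M_m(A)$-valued function of $t$, and differentiating the identity $e_p^2=e_p$ gives $de_p=e_p\,de_p+de_p\,e_p$, whence $e_p\,de_p\,e_p=0$. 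Because $\phi$ is a bounded linear functional it commutes with the simplex integral, and because it is tracial the cyclic-invariance identities needed to rerun Gorokhovsky's Stokes computation are available. No new analytic input beyond continuity and the trace property of $\phi$ is required, so the proof is a routine adaptation and can be safely omitted, as stated.
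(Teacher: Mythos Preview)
Your proposal is correct and aligns exactly with the paper's approach: the paper omits the proof entirely, stating only that it is ``almost identical to the proofs of \cite[Lemma~8 and Lemma~9]{goro:chern},'' and your outline is precisely a fleshed-out account of why Gorokhovsky's arguments go through once the matrix trace is replaced by the tracial state $\phi$.
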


It follows from the above lemmas that we have a well-defined map
\[
\T{Ch}_\phi^{2n} : V( C(X,A)) \to H^{2n}(X,\rr)
\]
and it can be verified that this map is a homomorphism, that is
\[
[ \T{Ch}_\phi^{2n}( p \oplus q) ] = [ \T{Ch}_\phi^{2n}(p)] + [\T{Ch}_\phi^{2n}(q)] ,
\]
whenever $p \in M_m( C(X,A))$ and $q \in M_{m'}(C(X,A))$ are projections.

In particular, we have the following:

\begin{dfn}
The Chern character in degree $2n$ associated to the faithful tracial state $\phi : A \to \cc$ and the compact Hausdorff space $X$ is the homomorphism of abelian groups
\[
\T{Ch}_\phi^{2n} : K_0( C(X,A)) \to H^{2n}(X,\rr), \q \T{Ch}_{\phi}( [p] - [q]) = [ \T{Ch}_\phi^{2n}(p)] - [ \T{Ch}_\phi^{2n}(q)]  .
\]
\end{dfn}

\subsection{Multiplicative properties}
We let
\[
\ti : K_0(C(X)) \otimes_\zz K_0(A)  \to K_0\big( C(X,A) \big)
\]
denote the exterior product. Recall that for projections $p \in M_m( C(X))$ and $q \in M_{m'}(A)$, the exterior product
\[
[p] \ti [q] \in K_0\big( C(X,A) \big)
\]
is represented by the projection $p \ot q \in M_{m \cd m'}( C(X,A)) \cong M_{m'}\big( M_m(C(X,A)) \big)$ given by the block-matrix
\[
(p \ot q)_{ij} = p \cd q_{ij} \q i,j \in \{1,\ldots,m'\} .
\]

We recall that the faithful tracial state $\phi : A \to \cc$ induces a homomorphism
\[
\phi_* : K_0(A) \to \rr, \q \phi_*( [p] - [q]) = \phi(p - q)  .
\]

\begin{lemma}\label{l:cheagr}
For every positive integer $n \geq 0$, we have the commutative diagram
\[
\begin{CD}
K_0(C(X)) \otimes_\zz K_0(A) @>{\ti}>> K_0\big( C(X, A) \big) \\
@V{1 \ot \phi_*}VV @V{\T{Ch}_\phi^{2n}}VV \\
K_0(C(X)) \otimes_\zz \rr @>{(\T{Ch}^{2n} \ot 1)}>> H^{2n}(X,\rr)
\end{CD}
\]
\end{lemma}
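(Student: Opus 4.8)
The plan is to verify the diagram directly on generators of $K_0(C(X)) \otimes_\zz K_0(A)$, namely on classes of the form $[p] \otimes [q]$ for projections $p \in M_m(C(X))$ and $q \in M_{m'}(A)$; since both composites are group homomorphisms and the exterior product together with $1 \otimes \phi_*$ is surjective onto a generating set, this suffices. Chasing $[p]\otimes[q]$ around the diagram, the top-right route produces $\T{Ch}_\phi^{2n}(p \otimes q)$, where $p \otimes q \in M_{m \cdot m'}(C(X,A))$ has entries $(p\otimes q)_{ij} = p \cdot q_{ij}$; the left-bottom route produces $\phi_*([q]) \cdot \T{Ch}^{2n}(p) = \phi(q) \cdot [\T{Ch}^{2n}(p)]$. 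So the content of the lemma is the cochain-level identity (for a suitable common refinement $\G U$ of covers adapted to $p$ and to $p\otimes q$)
\[
\big[\T{Ch}_\phi^{2n}(p \otimes q)\big] = \phi(q) \cdot \big[\T{Ch}^{2n}(p)\big] \in H^{2n}(X,\rr).
\]

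The key computational step is to track the spectral projection $e_{p\otimes q}(x,t)$ through the construction. Writing $a_{p\otimes q}(x,t) = a_p(x,t) \otimes q$ (which holds since the straight-line interpolation is entrywise and $q$ is a constant projection), one sees from the holomorphic functional calculus formula \eqref{eq:specproj} that $e_{p\otimes q}(x,t) = e_p(x,t) \otimes q$: indeed $q$ is a projection, so $\lambda - a_p(x,t)\otimes q$ decomposes along the range and kernel of $1\otimes q$, and the contour integral around $|\lambda - 1| = 1/2$ picks out $e_p(x,t) \otimes q$. Since $d$ differentiates only in the simplex variable $t$ and $q$ is constant, $d(e_{p\otimes q}(x,t)) = d(e_p(x,t)) \otimes q$, and hence the $2n$-fold product inside the integrand factors as $\big(e_p(x,t)\, d e_p(x,t) \cdots d e_p(x,t)\big) \otimes q$. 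Applying the trace $\phi$ on $M_{m\cdot m'}(A) \cong M_{m'}(M_m(A))$ to an elementary tensor $b \otimes q$ gives $\phi(b \otimes q) = \phi(b) \cdot \phi(q) = \T{tr}(b)\,\phi(q)$ when $b$ has scalar entries; more precisely, the entries of $e_p(x,t)\,de_p(x,t)\cdots de_p(x,t)$ are scalar multiples of $1_A$ (because $p$ has scalar entries), so $\phi$ applied to the tensor-product matrix equals $\phi(q)$ times the ordinary matrix trace of the $C(X)$-part. Pulling the constant $\phi(q)$ and the normalizing factor $(-1)^n/n!$ out of the integral over $\De^{2n}$ yields exactly $\phi(q) \cdot \T{Ch}^{2n}(p)(x)$ at the cochain level, and a fortiori in cohomology.

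The main obstacle is bookkeeping rather than conceptual: one must be careful about the identification $M_{m\cdot m'}(C(X,A)) \cong M_{m'}(M_m(C(X,A)))$ used to define the exterior product, ensure the chosen finite open cover $\G U$ is simultaneously fine enough for both $p$ and $p\otimes q$ (any common refinement works, using the refinement maps from Section \ref{subsec:remASco}), and confirm that the factorization $e_{p\otimes q} = e_p \otimes q$ is compatible with the grading/matrix-amplification conventions so that $\phi$ on the amplified algebra really does split as (ordinary trace on the $C(X)$-slot) times $\phi$ on the $A$-slot. Once these identifications are pinned down the identity is immediate, and well-definedness on $K_0$ follows from Lemma \ref{lemma:fchern} together with bilinearity of both composites, which reduces everything to the generator check above.
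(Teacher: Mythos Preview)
Your proposal is correct and follows essentially the same approach as the paper: reduce to generators $[p]\otimes[q]$, prove the cochain-level identity $\T{Ch}_\phi^{2n}(p\otimes q) = \phi(q)\cdot \T{Ch}^{2n}(p)$, and derive this from the factorization $e_{p\otimes q}(x,t) = e_p(x,t)\otimes q$. The only cosmetic difference is that the paper justifies this last identity via the explicit resolvent formula $(\lambda - a_{p\otimes q})^{-1} = (\lambda - a_p)^{-1}\otimes q + \lambda^{-1}\otimes(1-q)$ and observes that the second summand is analytic inside the contour, whereas you phrase the same decomposition more informally as splitting along the range and kernel of $1\otimes q$.
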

\begin{proof}
Given $p \in M_m(C(X))$ and $q \in M_{m'}(A)$, the commutativity of the diagram follows from the identity 
\[
[ \Chern^{2n}(p) ] \cd \phi(q) = [ \Chern^{2n}_\phi(p\otimes q) ].
\]
We shall in fact see that this identity holds at the level of cochains. We focus on the case where $n \geq 1$. In this situation, it suffices to show that
\begin{equation}\label{eq:chercomm}
e_{p\otimes q}(x,t)=e_p(x,t)\otimes q,
\end{equation}
for all $x \in \G U^{2n+1}$ and all $t \in \De^{2n}$. Indeed, if Equation \eqref{eq:chercomm} were true, then from Equation \eqref{eq:specproj} we would have that
\begin{align*}
\T{Ch}_\phi^{2n}(p\otimes q)(x) &= \frac{(-1)^n}{ n!} \int_{\De^{2n}} \text{Tr}\big( e_p(x,t) d( e_p(x,t) ) \wlw d(e_p(x,t)) \big)\cd \phi(q)\\
&= \Chern^{2n}(p)(x) \cd \phi(q),
\end{align*}
for all $x \in \G U^{2n + 1}$, where $\T{Tr} : M_m(\cc) \to \cc$ denotes the matrix trace (without normalization). Now, for each $\la \in \cc$ with $|\la - 1| = 1/2$, it is easily verified that
\[
(\lambda- a_{p\otimes q}(x,t))^{-1}=(\lambda- a_p(x,t))^{-1}\otimes q + \frac{1}{\lambda}\otimes (1- q) .
\]
The identity above is exactly what we need, since the function $\frac{1}{\lambda} \otimes (1 -q)$ is analytic on an open set containing $\{\lambda\in\Cc \mid |\lambda-1|\leq 1/2\}$, and therefore its contour integral along the boundary of that disk is zero.
\end{proof}

\begin{prop}\label{prop:injchern}
Suppose that $A$ is a $\T{II}_1$-factor. Then the Chern character
\[
\T{Ch}_\phi : K_0\big( C(X,A) \big) \to \oplus_{n = 0}^\infty H^{2n}(X,\rr) \q \T{Ch}_\phi(x) = \{ \T{Ch}_\phi^{2n}(x) \}
\]
is an isomorphism.
\end{prop}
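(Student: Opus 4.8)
The plan is to reduce the statement to the classical fact that the ordinary Chern character $\T{Ch} : K_0(C(X)) \otimes_\zz \rr \to \bigoplus_{n=0}^\infty H^{2n}(X,\rr)$ is an isomorphism for any compact Hausdorff space $X$; this is standard in the Alexander-Spanier picture (it is precisely the content of Gorokhovsky's construction in \cite{goro:chern}, or alternatively follows from the Chern character isomorphism $K^0(X)\otimes_\zz\rr\cong \check H^{\T{even}}(X,\rr)$ together with the identification of Čech and Alexander-Spanier cohomology). So the real task is to show that, when $A$ is a $\T{II}_1$-factor, the group $K_0(C(X,A))$ is built out of $K_0(C(X))$ and $K_0(A)\cong\rr$ in a way that is compatible with $\T{Ch}_\phi$ via Lemma \ref{l:cheagr}. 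First I would recall that a $\T{II}_1$-factor $A$ has $K_0(A)\cong\rr$ with the isomorphism implemented by the (unique normalized) trace $\phi_*$, and that $A$ is $K_1$-trivial and has cancellation of projections; more to the point, the exterior product
\[
\ti : K_0(C(X)) \otimes_\zz K_0(A) \to K_0(C(X,A)) = K_0(C(X)\hot A)
\]
is an isomorphism after tensoring the left-hand side appropriately. The cleanest way to see the latter is to invoke the Künneth theorem for $C^*$-algebras \cite{black:kth}: since $C(X)$ is in the bootstrap class and $K_*(A)=K_0(A)=\rr$ is torsion-free with $K_1(A)=0$, the Künneth short exact sequence degenerates and gives $K_0(C(X)\hot A)\cong K_0(C(X))\otimes_\zz \rr$, with the iso being exactly $\ti$ composed with $1\otimes(\phi_*^{-1})$. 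Here one uses crucially that $\rr$ is flat over $\zz$, so the $\T{Tor}$ term vanishes and no odd contribution appears.

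With this in hand, the argument is a diagram chase. Consider the square of Lemma \ref{l:cheagr}, summed over all $n\geq 0$:
\[
\begin{CD}
K_0(C(X)) \otimes_\zz K_0(A) @>{\ti}>> K_0(C(X,A)) \\
@V{1\otimes\phi_*}VV @VV{\T{Ch}_\phi}V \\
K_0(C(X))\otimes_\zz \rr @>{\T{Ch}\otimes 1}>> \bigoplus_{n=0}^\infty H^{2n}(X,\rr).
\end{CD}
\]
The top map is an isomorphism by Künneth as just discussed; the left map is an isomorphism because $\phi_* : K_0(A)\to\rr$ is an isomorphism for a $\T{II}_1$-factor; the bottom map is an isomorphism by the classical Chern character theorem. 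Commutativity is Lemma \ref{l:cheagr}. Hence $\T{Ch}_\phi$ is an isomorphism, which is the assertion.

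The main obstacle I anticipate is justifying the surjectivity and injectivity of the exterior product $\ti$ with the precise source $K_0(C(X))\otimes_\zz K_0(A)$ — i.e., checking that the Künneth sequence really does collapse to an isomorphism here and that the resulting iso is implemented by $\ti$ (not merely abstractly isomorphic). One must be slightly careful that $X$ need not be second-countable or a CW complex, so one should either restrict to the bootstrap setting via a direct-limit/Čech argument over finite open covers and polyhedral approximations, or give a direct argument: every finitely generated projective module over $C(X,A)$ is, after stabilizing and using that $A$ has real rank zero and cancellation, unitarily close to one of the form $p\otimes q$ with $p$ over $C(X)$ and $q$ over $A$, because the local triviality afforded by the chosen cover $\G U$ together with $K_0(A)=\rr$ forces the "$A$-part" of the classifying data to be globally constant up to the relevant norm estimate. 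I would present the Künneth route as the primary argument and remark that the hypothesis that $A$ is a $\T{II}_1$-factor is used exactly twice: once for $K_0(A)\cong\rr$ and $K_1(A)=0$, and once for $\phi_*$ being the comparison isomorphism.
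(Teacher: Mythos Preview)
Your proposal is correct and follows essentially the same route as the paper: use that a $\T{II}_1$-factor has $K_0(A)\cong\rr$ via $\phi_*$ and $K_1(A)=0$, apply the K\"unneth theorem to conclude that the exterior product $\ti$ is an isomorphism, and then invoke Lemma \ref{l:cheagr} together with the classical Chern character isomorphism to finish by a diagram chase. The paper simply cites Schochet \cite{scho:kunn} for K\"unneth and Karoubi \cite{kar:chern} for the classical Chern isomorphism, without the extra caution you voice about the bootstrap hypothesis; your suggested alternative direct argument is therefore not needed for the paper's purposes.
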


Without explicit mention of the Chern character, the previous proposition is proved in \cite[Corollary 3]{raeburn:kreal}. It is also proved in \cite[Theorem 5.7]{schick:ltwo} in the smooth setting by using Chern-Weil theory, see also \cite[Diagram 3.7]{skandalis:flat}.

\begin{proof}
Since $A$ is a $\T{II}_1$-factor, it follows as in \cite[III.1.7.9, p.~242]{black:opalg}, that the faithful tracial state $\phi : A \to \cc$ induces an isomorphism $K_0(A) \cong \rr$ of abelian groups. Moreover, since $A$ is a von Neumann algebra, we have that $K_1(A) \cong \{0\}$. Since $\rr$ is torsion-free, the exterior product
\[
\ti : K_0(C(X)) \otimes_\zz K_0(A)  \to K_0(C(X,A))
\]
is an isomorphism by the K\"unneth theorem, see \cite[Proposition 2.11]{scho:kunn}. Therefore it suffices to show that the composition
\[
\T{Ch}_\phi \ci \ti : K_0(C(X)) \otimes_\zz K_0(A) \to \oplus_{n = 0}^\infty H^{2n}(X,\rr)
\]
is an isomorphism. However, by Lemma \ref{l:cheagr} we have that
\[
(\T{Ch}_\phi \ci \ti) = (\T{Ch} \ot 1)\circ(1 \ot \phi_*) ,
\]
where $\T{Ch} : K_0( C(X)) \to \oplus_{n = 0}^\infty H^{2n}(X,\rr)$ is the usual Chern character with values in Alexander-Spanier cohomology. This ends the proof of the proposition since $\T{Ch}$ becomes an isomorphism after tensorizing with $\rr$, see \cite{kar:chern}.
\end{proof}

\subsection{Flat bundles}\label{sec:flat}
We now consider a flat bundle over the compact Hausdorff space $X$ with fiber a finitely generated projective module $q A^m$ over the unital $C^*$-algebra $A$, thus $q \in M_m(A)$ is a projection. We thus fix an open cover $\{ V_i\}_{i = 1}^N$ of our compact Hausdorff space $X$ together with locally constant maps
\[
g_{ij} : V_i \cap V_j \to U(q A^m) \q i,j \in \{1,\ldots,N\} ,
\]
for some fixed $m \in \nn$, where $U(q A^m)$ denotes the group of unitary transformations of the Hilbert $C^*$-module $q A^m$. We identify $U(q A^m)$ with the group of $(m \ti m)$-matrices $u$ satisfying the conditions
\[
q u = u = u q \q \T{and} \q u^* u = u u^* = q .
\]
The locally constant maps are required to satisfy the cocycle condition:
\[
\begin{split}
& g_{ii} = q \q \mbox{and} \\
& g_{ij}(x) \cd g_{jk}(x) = g_{ik}(x) \q \forall x \in V_i \cap V_j \cap V_k,
\end{split}
\]
whenever $i,j,k \in \{1,\ldots,N\}$. 

Let us choose a partition of unity $\{\chi_i \mid i=1,\dots,N\}$ for $X$ with $\T{supp}(\chi_i) \subseteq V_i$ for all $i \in \{1,\ldots,N\}$. Our cocycle then gives rise to a projection $p_A \in M_{N \cd m}(C(X,A)) \cong M_N( M_m(C(X,A)))$ defined as the block-matrix
\begin{equation}\label{eq:pproj}
(p_A)_{ij} = \sqrt{\chi_i\chi_j} \cd g_{ij} \q i,j \in \{1,\ldots,N\}.
\end{equation}

Finally, we have the projection $p \in M_N(C(X,A))$ defined as the matrix
\[
p_{ij} = \sqrt{\chi_i\chi_j}  \q i,j \in \{1,\ldots,N\} .
\]

We are going to prove the following:

\begin{thm}\label{thm:projid}
We have the identity
\[
\T{Ch}^{2n}_\phi( [p_A] ) = \T{Ch}^{2n}_\phi( [q]) = \fork{ccc}{ 0 & \T{for} & n > 0 \\ \, [\phi(q)] & \T{for} & n = 0 } ,
\]
where $[\phi(q)] \in H^0(X,\rr)$ refers to the class in Alexander-Spanier cohomology associated to the constant function on $X$ equal to $\phi(q) \in [0,\infty)$ at every point. 
\end{thm}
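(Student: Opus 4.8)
The plan is to reduce the statement to a computation about the two projections $p_A$ and $q$, and to exploit the "diagonal localization" built into Alexander-Spanier cohomology together with the flatness (local constancy) of the transition cocycle $g_{ij}$. The key observation is that $p_A$ and $p\otimes q$ (in the notation of Lemma~\ref{l:cheagr}) should be compared: the matrix $(p_A)_{ij}=\sqrt{\chi_i\chi_j}\cdot g_{ij}$ differs from the constant-cocycle model $p\otimes q$, whose entries are $\sqrt{\chi_i\chi_j}\cdot q$, precisely by the unitaries $g_{ij}$. First I would fix a finite open cover $\mathfrak U$ refining $\{V_i\}$ and fine enough that $\|p_A(x)-p_A(x')\|\le 1/4$ for $x,x'$ in a common $U\in\mathfrak U$, and similarly for $p\otimes q$; since the $g_{ij}$ are \emph{locally constant}, on each $U\in\mathfrak U$ (which we may assume is contained in some $V_i$, and small enough to meet only finitely many $V_j$ on which $g_{ij}$ takes a single value) the cocycle is genuinely constant.

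The main step is then to produce, on each such $U$, a constant unitary $w_U\in M_{N\cdot m}(A)$ conjugating $p_A$ to $p\otimes q$ pointwise over $U$: concretely, using a fixed base index $i(U)$ with $U\subseteq V_{i(U)}$, set $(w_U)_{jk}=\delta_{jk}\,g_{i(U)\,j}$ (with the constant value taken over $U$), which is unitary in the appropriate corner because of the cocycle relations $g_{ij}g_{jk}=g_{ik}$, $g_{ii}=q$. One checks $w_U^*(p\otimes q)(x)w_U=p_A(x)$ for all $x\in U$. Because the Chern cocycle $\T{Ch}^{2n}_\phi(p_A)(x_0,\dots,x_{2n})$ is defined using only the values $p_A(x_0),\dots,p_A(x_{2n})$ with all $x_j$ in a common $U$ — this is exactly the diagonal-localization feature — and because the construction of $e_p(x,t)$ via the holomorphic functional calculus in \eqref{eq:specproj} is equivariant under conjugation by the \emph{constant} unitary $w_U$, and $\phi$ is a trace, one obtains $\T{Ch}^{2n}_\phi(p_A)(x)=\T{Ch}^{2n}_\phi(p\otimes q)(x)$ as cochains on $\mathfrak U^{2n+1}$ (for $n=0$ directly $\phi(p_A(x))=\phi((p\otimes q)(x))=\phi(q)$ by traciality). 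Hence $[\T{Ch}^{2n}_\phi(p_A)]=[\T{Ch}^{2n}_\phi(p\otimes q)]$ in $H^{2n}(X,\rr)$.

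To finish, I invoke Lemma~\ref{l:cheagr}: $[p\otimes q]=[p]\times[q]$ in $K_0(C(X,A))$, so $\T{Ch}^{2n}_\phi([p\otimes q])=\T{Ch}^{2n}([p])\cdot\phi(q)$. But $p_{ij}=\sqrt{\chi_i\chi_j}$ is the Bott-type projection coming from the \emph{trivial} line bundle on $X$ — it is the image under $K_0(\iota)$ of $[1]\in K_0(\Cc)$, equivalently $p$ is Murray-von Neumann equivalent to a rank-one constant projection — so $[p]=[1_{C(X)}]\in K_0(C(X))$ and $\T{Ch}^{2n}([p])=0$ for $n>0$ while $\T{Ch}^0([p])=[1]$. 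Therefore $\T{Ch}^{2n}_\phi([p_A])=0$ for $n>0$ and $\T{Ch}^0_\phi([p_A])=[\phi(q)]$; the chain $\T{Ch}^{2n}_\phi([q])=\T{Ch}^{2n}_\phi([p\otimes q])$ is handled identically (the class $[q]\in K_0(C(X,A))$ being $[1_{C(X)}]\times[q]$), giving all the claimed equalities.

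The step I expect to be the genuine obstacle is the second one: verifying carefully that conjugation by the locally constant unitary $w_U$ intertwines not merely the projections $p_A$ and $p\otimes q$ but the entire interpolated family $a_{p_A}(x,t)$ with $a_{p\otimes q}(x,t)$ and hence the spectral projections $e_{p_A}(x,t)=w_U^*e_{p\otimes q}(x,t)w_U$, and that — crucially — $w_U$ being \emph{constant} means it passes through the exterior derivative $d$ in the simplex directions, so that $\phi(e\,de\wedge\cdots\wedge de)$ is genuinely unchanged. This requires being precise about which cover one works over and arguing that the open sets $U\in\mathfrak U$ can simultaneously be taken (i) inside some $V_i$, (ii) small enough that all relevant $g_{ij}$ are constant on $U$, and (iii) fine enough for the $1/4$-estimate; a standard compactness/common-refinement argument handles this, but it must be spelled out to make the cochain-level identity legitimate.
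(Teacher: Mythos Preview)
Your argument is correct, and it is genuinely different from the paper's. The paper proves the theorem via a direct block-matrix computation (its Lemma~\ref{l:realtech}): it expands the resolvent $(\lambda-a_{p_A})^{-1}$ as a Neumann series, writes out $\big(e_{p_A}\,d(e_{p_A})^{\wedge 2n}\big)_{ij}$ as a sum of long products of $g_{i_\alpha i_{\alpha+1}}$'s, and then invokes a delicate combinatorial covering lemma (Lemma~\ref{l:cover}) to guarantee that, for any nonvanishing summand, all the relevant $V_{i_\alpha}$ meet so the cocycle relation collapses the product to $g_{ij}(x_0)$. Your route sidesteps both the series expansion and Lemma~\ref{l:cover} entirely: by choosing $\mathfrak U$ subordinate to $\{V_i\}$ with each $g_{ij}$ constant on the sets of $\mathfrak U$, you produce on each $U$ a \emph{constant} partial isometry $w_U=\mathrm{diag}(g_{i(U)j})$ (completed to a unitary by $\mathrm{diag}(1-q)$) conjugating $p\otimes q$ to $p_A$ pointwise; constancy lets $w_U$ pass through $d$, and traciality of $\phi$ gives the cochain-level equality $\T{Ch}^{2n}_\phi(p_A)=\T{Ch}^{2n}_\phi(p\otimes q)$, after which Lemma~\ref{l:cheagr} and $[p]=[1_{C(X)}]$ finish. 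Your argument is shorter and more conceptual---it is essentially the observation that a flat bundle is \emph{locally} constantly gauge-equivalent to the trivial one, and Alexander--Spanier cochains only see local data---whereas the paper's computation yields the finer matrix identity $\big(e_{p_A}\,d(e_{p_A})^{\wedge 2n}\big)_{ij}=e_p\,d(e_p)^{\wedge 2n}\cdot g_{ij}(x_0)$, which is stronger than needed here but may be of independent interest. The only cosmetic point to tidy in your write-up is to say explicitly that for indices $j$ with $U\cap \mathrm{supp}(\chi_j)=\emptyset$ you set $(w_U)_{jj}=q$, and that $w_U$ is completed to an honest unitary in $M_{Nm}(A)$ by adding $\mathrm{diag}(1-q)$; both are harmless since those blocks never contribute.
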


When the base space is a compact manifold without boundary, the previous result is proved in \cite[Theorem 5.8]{schick:ltwo} and \cite[Section 4]{skandalis:flat}.

The more general case where the base space is just a compact Hausdorff space requires extra care. We start with a technical lemma.

\begin{lemma}\label{l:cover}
Suppose that $\G K = \{ K_1,K_2,\ldots,K_l\}$ is a finite set of closed subsets of the compact Hausdorff space $X$. Then there exists a finite open cover $\G U$ of $X$ such that the implication
\[
\Big( (U \cap K_i ) \neq \emptyset \, \, \, \forall i \in I \Big) \rar \Big(  \bigcap_{i \in I} K_i \neq \emptyset \Big)
\]
holds for all non-empty subsets $I \su \{1,2,\ldots,l\}$ and all $U \in \G U$.
%
\end{lemma}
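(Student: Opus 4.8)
The plan is to reformulate the desired property at the level of index sets and then build the cover pointwise. Call a non-empty subset $I \subseteq \{1,2,\ldots,l\}$ \emph{bad} if $\bigcap_{i \in I} K_i = \emptyset$, and let $\G B$ denote the (finite) collection of all bad subsets of $\{1,\ldots,l\}$. Phrased contrapositively, what we want from $\G U$ is precisely this: for every $U \in \G U$ and every bad $I \in \G B$, the open set $U$ is disjoint from $K_i$ for at least one $i \in I$. If this holds, then whenever $U$ meets every $K_i$ with $i \in I$, the set $I$ cannot be bad, which is exactly the stated implication.

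First I would fix a point $x \in X$ and construct a suitable open neighborhood of it. For each bad $I \in \G B$ we have $x \notin \bigcap_{i \in I} K_i$, so there is an index $i(x,I) \in I$ with $x \notin K_{i(x,I)}$; since $K_{i(x,I)}$ is closed, choose an open set $W_{x,I}$ with $x \in W_{x,I}$ and $W_{x,I} \cap K_{i(x,I)} = \emptyset$. Because $\G B$ is finite, $W_x := \bigcap_{I \in \G B} W_{x,I}$ is again an open neighborhood of $x$ (with the convention $W_x = X$ when $\G B = \emptyset$), and by construction $W_x \cap K_{i(x,I)} = \emptyset$ for every $I \in \G B$.

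Next, the family $\{ W_x \}_{x \in X}$ is an open cover of $X$, so by compactness I would extract a finite subcover $\G U = \{ W_{x_1}, \ldots, W_{x_k}\}$. It then remains to verify that $\G U$ has the stated property. Let $U = W_{x_j} \in \G U$ and let $I$ be a non-empty subset with $U \cap K_i \neq \emptyset$ for all $i \in I$. If $I$ were bad, then $I \in \G B$, and $U = W_{x_j} \subseteq W_{x_j,I}$ would be disjoint from $K_{i(x_j,I)}$, whereas $i(x_j,I) \in I$ — contradicting $U \cap K_{i(x_j,I)} \neq \emptyset$. Hence $I$ is not bad, i.e.\ $\bigcap_{i \in I} K_i \neq \emptyset$, which is what we wanted.

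I do not expect a genuine obstacle here: the whole point is that the bad index sets form a \emph{finite} list, so a single point can simultaneously avoid every bad configuration using a finite intersection of neighborhoods, after which compactness of $X$ upgrades this to a finite open cover. The only things to be careful about are the degenerate case $\G B = \emptyset$ (where the conclusion of the lemma holds trivially) and the bookkeeping ensuring that the chosen index $i(x,I)$ actually lies in $I$.
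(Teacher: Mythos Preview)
Your proof is correct and considerably more direct than the paper's. The paper builds the cover inductively: it enumerates the lattice $\G A$ of all intersections of the $K_i$, ordered so that at each stage the only ``new'' intersections with a $K \in \G K$ are ones already handled, and then uses normality of $X$ to separate the residual closed piece $L_{n+1}$ from the relevant $K$'s. This gives finer control over exactly which $K_i$'s each $U_j$ is allowed to meet, but for the lemma as stated none of that is needed.

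Your argument instead isolates the contrapositive content (avoid every ``bad'' $I$ at once) and handles it pointwise with a finite intersection of complements, then invokes compactness. Two small observations worth making explicit: you never actually use the Hausdorff hypothesis---closedness of the $K_i$ and compactness of $X$ suffice---whereas the paper's construction genuinely needs normality; and you can take $W_{x,I} = X \setminus K_{i(x,I)}$ directly, so no separation axiom is invoked at all. The degenerate cases you flag ($\G B = \emptyset$, and the bookkeeping that $i(x,I) \in I$) are handled correctly.
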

\begin{proof}
The case where $\G K$ is empty is trivial, so we suppose that $l = \sharp \G K \geq 1$.

Define the set 
\[
\G A = \big\{ \cap_{i \in I} K_i \mid I \su \{1,2,\ldots,l\} \, , \, \, I \neq \emptyset \big\} \cup \{ \emptyset \}.
\]

Let $n \in \nn$ and suppose that $C_1,C_2,\ldots,C_n \in \G A$ and that $U_1,U_2,\ldots,U_n \su X$ are open subsets such that
\begin{enumerate}
\item $C_1 = U_1 = \emptyset$;
\item $C_j \in \G A \sem \{C_1,C_2,\ldots,C_{j-1} \}$ for all $j \in \{2,3,\ldots,n\}$;
\item it holds for all $K \in \G K$ and all $j \in \{2,3,\ldots,n\}$ that
\[
C_j \cap K = C_j \q \T{or} \q C_j \cap K \in \{C_1,C_2,\ldots,C_{j-1}\};
\]
\item $C_j \cap ( X \sem \cup_{i = 1}^{j-1} U_i) \su U_j$ for all $j \in \{2,3,\ldots,n\}$;
\item the implication
\[
\Big( C_j \cap K \in \{ C_1,C_2,\ldots,C_{j-1} \} \Big) \rar \Big( U_j \cap K = \emptyset \Big)
\]
holds for all $K \in \G K$ and all $j \in \{2,3,\ldots,n\}$.
\end{enumerate}
We remark that $\cup_{i = 1}^j C_i \su \cup_{i = 1}^j U_i$ for all $j \in \{1,2,\ldots,n\}$. Indeed, to see this it suffices to check that $C_j \su \cup_{i = 1}^j U_i$ for $j \in \{2,3,\ldots,n\}$. But this is clear since $C_j \cap (X \setminus \cup_{i = 1}^{j - 1} U_i) \su U_j$ by construction and obviously $C_j \cap ( \cup_{i = 1}^{j - 1} U_i ) \su \cup_{i = 1}^{j-1} U_i$.

Notice that $\G A$ is finite because $\G K$ is finite. If $\G A \sem \{ C_1,C_2,\ldots,C_n\} \neq \emptyset$, we perform the following step: we start by choosing $C_{n + 1} \in \G A \sem \{C_1,C_2,\ldots,C_n\}$ such that it holds for all $K \in \G K$ that
\[
C_{n + 1} \cap K = C_{n + 1} \q \T{or} \q C_{n + 1} \cap K \in \{C_1,C_2,\ldots,C_n\}.
\]
Secondly, we construct an open set $U_{n+1}\subseteq X$ in such a way that $C_1,C_2,\ldots,C_{n+1} \in \G A$ and $U_1,U_2,\ldots,U_{n+1} \su X$ satisfy $(1)-(5)$ from above. The construction of $U_{n+1}$ goes as follows. 
We define
\[
L_{n + 1} = C_{n + 1} \cap \big( X \setminus \cup_{i = 1}^n U_i \big)
\]
and claim that it holds for all $K \in \G K$ that
\[
\big( C_{n + 1} \cap K  \in \{ C_1,C_2,\ldots,C_n\} \big) \rar \big( L_{n + 1} \cap K  = \emptyset \big)
\]
Indeed, if $C_{n + 1} \cap K  \in \{ C_1,C_2,\ldots,C_n\}$, then $C_{n + 1} \cap K \su \cup_{i = 1}^n U_i$ so that
\[
K \cap L_{n + 1} = K \cap C_{n + 1} \cap \big( X \setminus \cup_{i = 1}^n U_i \big) = \emptyset.
\]
Since $X$ is compact Hausdorff (and hence normal), there exists an open subset $U_{n + 1} \su X$ such that
\[
L_{n + 1} \su U_{n + 1}
\]
and such that the implication
\[
\big( C_{n + 1} \cap K \in \{ C_1,C_2,\ldots,C_n\} \big) \rar \big( U_{n + 1} \cap K = \emptyset \big)
\]
holds for all $K \in \G K$.

By iterating the previous step a possibly large but finite number of times, we arrive at families $C_1,C_2,\ldots,C_m \in \G A$ and $U_1,U_2,\ldots,U_m \su X$ satisfying $(1)-(5)$ from above, and such that $\G A = \{ C_1,C_2,\ldots,C_m\}$. We define
\[
U_{m+1}=X\setminus \cup_{i = 1}^l K_i
\]
and claim that $\G U=\{U_i\}_{j=1}^{m+1}$ is the desired open cover. 

First of all, we prove that $\G U$ is indeed a cover. To this end, we just need to show that $\cup_{i = 1}^l K_i \su \cup_{j = 1}^m U_j$, but this is clear since $\cup_{i = 1}^l K_i = \cup_{j = 1}^m C_j \su \cup_{j = 1}^m U_j$.

Now suppose that $I \su \{1,2,\ldots,l\}$ is a non-empty subset, that $U \in \G U$ and that $U \cap K_i \neq \emptyset$ for all $i \in I$. By the definition of $U_1$ and $U_{m+1}$, we must have that $U = U_j$ for some $j\in \{2,\dots,m\}$. By property $(3)$ and $(5)$, it thus holds that $C_j \cap K_i = C_j$ for all $i \in I$. But this implies that
\[
C_j = C_j \cap \big( \cap_{i \in I} K_i \big) \su \cap_{i \in I} K_i
\]
and hence since $C_j \neq \emptyset$ that $\cap_{i \in I} K_i \neq \emptyset$. This proves the lemma.
\end{proof}

For each $i,j \in \{1,\ldots,N\}$ with $i \neq j$ define the closed subset
\[
K_{ij} = \T{supp}( \chi_i) \cap \T{supp}( \chi_j ) \su X
\]
and define 
\[
\G K = \big\{ K_{ij} \mid i,j \in \{1,2,\ldots,N\} \big\}.
\]

Let $\G U$ be a finite open cover of $X$ satisfying the conclusion of Lemma \ref{l:cover}. 
By passing to a refinement we may also arrange that:
\begin{itemize}
\item $\| p(x) - p(x') \| \, \, \T{and} \, \, \| p_A(x) - p_A(x') \| \leq 1/4$ \text{ for all } $U \in \G U$ \text{ and } $x,x'\in U$;
\item for all $U\in\G U$, whenever $V_i\cap V_j\cap U\neq \emptyset$, the map $g_{ij}\colon V_i\cap V_j\cap U \to U(q A^m)$ is constant.
\end{itemize}

The following lemma is exactly what we need to prove Theorem \ref{thm:projid}.

\begin{lemma}\label{l:realtech}
Let $n \geq 1$, let $x = (x_0,x_1,\ldots,x_{2n}) \in \G U^{2n+1}$ and let $t \in \De^{2n}$ be given. We have the identity,
\[
\Big( e_{p_A}(x,t) d\big( e_{p_A}(x,t) \big)^{\we 2n} \Big)_{ij} = e_p(x,t) d\big( e_p(x,t) \big)^{\we 2n} \cd g_{ij}(x_0),
\]
for each $i,j\in\{1,\dots,N\}$ indexing the $(m\times m)$-block matrices.
\end{lemma}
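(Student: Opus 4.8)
The plan is to exhibit, for each $i,j$ with $V_i\cap V_j\cap U\neq\emptyset$ for all $U$ containing the relevant points, an explicit intertwining between $a_{p_A}(x,t)$ and $a_p(x,t)$ given by right multiplication by $g_{ij}(x_0)$, and then propagate this intertwining through the holomorphic functional calculus defining $e_p$ and $e_{p_A}$, and finally through the exterior derivative. The main point is that the projection $p_A$ is a ``twist'' of $p$ by the flat cocycle: $(p_A)_{ij} = \sqrt{\chi_i\chi_j}\,g_{ij}$ while $p_{ij}=\sqrt{\chi_i\chi_j}$, and the cocycle condition $g_{ij}g_{jk}=g_{ik}$ together with local constancy on the refined cover forces the twist to be compatible with all algebraic operations appearing in the formula for the Chern cocycle.

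First I would prove the block identity
\[
\big( a_{p_A}(x,t) \big)_{ij} = a_p(x,t)\cdot g_{ij}(x_0) \q \forall\, i,j .
\]
This reduces to checking $(p_A(x_k))_{ij} = p(x_k)\cdot g_{ij}(x_0)$ for each vertex $x_k$ of the simplex. Here is where Lemma~\ref{l:cover} enters: since all of $x_0,\ldots,x_{2n}$ lie in a common $U\in\G U$, and $U$ meets $K_{ij}=\T{supp}(\chi_i)\cap\T{supp}(\chi_j)$ whenever $\sqrt{\chi_i\chi_j}(x_k)\neq 0$, the conclusion of the lemma guarantees $\bigcap K_{ij}\neq\emptyset$ for the relevant index pairs, hence $V_i\cap V_j\cap U\neq\emptyset$, so $g_{ij}$ is constant on $V_i\cap V_j\cap U$ by our second refinement condition; in particular $g_{ij}(x_k)=g_{ij}(x_0)$ for all $k$. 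Combined with the cocycle identity (used in the form $g_{ik}g_{kj}=g_{ij}$ when summing over a hidden matrix index, if needed) and $g$ being unitary-valued so $g_{ij}^* g_{ij}=q$, one gets $(p_A(x_k))_{ij}=\sqrt{\chi_i\chi_j}(x_k)\,g_{ij}(x_0)=(p(x_k))_{ij}\,g_{ij}(x_0)$. Taking the affine combination over $t\in\De^{2n}$ gives the displayed identity for $a_{p_A}$.

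Next I would push this through the resolvent. Since $g_{ij}(x_0)$ is a fixed scalar matrix independent of $t$, the identity $(a_{p_A})_{ij}=a_p\cdot g_{ij}(x_0)$ together with the cocycle/unitarity relations gives, for $|\la-1|=1/2$,
\[
\big( (\la - a_{p_A}(x,t))^{-1} \big)_{ij} = (\la - a_p(x,t))^{-1}\cdot g_{ij}(x_0),
\]
which one verifies by multiplying out using block matrix multiplication and $\sum_k g_{ik}g_{kj}=g_{ij}$; integrating over the contour as in \eqref{eq:specproj} yields $(e_{p_A}(x,t))_{ij}=e_p(x,t)\cdot g_{ij}(x_0)$. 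Finally, because the $g_{ij}(x_0)$ are \emph{constants} (with respect to the simplex variable $t$), the exterior derivative $d$ passes straight through: $d\big((e_{p_A})_{ij}\big) = d(e_p)\cdot g_{ij}(x_0)$, and iterating the block multiplication $2n$ times while cancelling $g_{jk}(x_0)g_{kj}(x_0)^{*}=q$-type products telescopes the cocycle factors, leaving a single $g_{ij}(x_0)$ on the right. This gives exactly
\[
\Big( e_{p_A}(x,t)\, d\big(e_{p_A}(x,t)\big)^{\we 2n} \Big)_{ij} = e_p(x,t)\, d\big(e_p(x,t)\big)^{\we 2n}\cdot g_{ij}(x_0).
\]

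The main obstacle is the bookkeeping in the second refinement step: one must be careful that the finite open cover from Lemma~\ref{l:cover} can \emph{simultaneously} be refined so that (a) the Chern cocycle is defined (the $1/4$ estimate for both $p$ and $p_A$), (b) each $g_{ij}$ is constant on $V_i\cap V_j\cap U$, and (c) the implication of Lemma~\ref{l:cover} for the family $\G K=\{K_{ij}\}$ still holds after refining — which it does, since the conclusion of Lemma~\ref{l:cover} is inherited by refinements. Granting this, the argument is essentially a formal verification; the only genuinely subtle point is using Lemma~\ref{l:cover} to convert ``$\sqrt{\chi_i\chi_j}(x_k)\neq 0$ for points in a common $U$'' into ``$g_{ij}$ is defined and locally constant there,'' i.e. into nonemptiness of the triple (and higher) intersections $V_i\cap V_j\cap\cdots\cap U$ needed to make sense of the cocycle relations on $U$.
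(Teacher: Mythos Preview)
Your overall strategy --- reduce to the blockwise identity $(p_A)_{ij}=p_{ij}\cdot g_{ij}(x_0)$, push it through the functional calculus, and let the cocycle collapse --- is exactly the paper's strategy. But two steps in your execution do not go through as written.

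\textbf{The resolvent identity is false.} You claim $\big((\la-a_{p_A})^{-1}\big)_{ij}=(\la-a_p)^{-1}\cdot g_{ij}(x_0)$. Take $N=1$, $p=1$, $p_A=q$: then $(\la-q)^{-1}=\tfrac{1}{\la-1}q+\tfrac{1}{\la}(1-q)$, while your formula predicts $\tfrac{1}{\la-1}q$. The obstruction is the $\la\cdot 1_m$ on the diagonal, which does not factor through $q$. The paper records precisely this discrepancy as $(\ga_{p_A})_{\al\be}\cdot q=(\ga_p)_{\al\be}\,g_{\al\be}(x_0)$ (note the extra $q$) and then argues that the unwanted $(1-q)/\la$ piece is annihilated whenever it is sandwiched against a $\de_{p_A}$ or a $p_A(x_s)-p_A(x_0)$ factor (since those live in $q M_m(A) q$), and that the residual contribution integrates to zero over the contour. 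Your argument needs this correction.

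\textbf{The cocycle does not ``telescope'' for free.} The line ``multiplying out using $\sum_k g_{ik}g_{kj}=g_{ij}$'' is not a valid identity; the cocycle relation is $g_{ik}g_{kj}=g_{ij}$ for each $k$ with $V_i\cap V_j\cap V_k\neq\emptyset$, not a sum. When you expand a product of block matrices, you get a sum over chains $i=i_{-1},i_0,\ldots,i_{M-1}=j$ of terms $C_{i_{-1}\ldots i_{M-1}}\cdot g_{i_{-1}i_0}\cdots g_{i_{M-2}i_{M-1}}(x_0)$, and the collapse $g_{i_{-1}i_0}\cdots g_{i_{M-2}i_{M-1}}=g_{ij}$ requires $\bigcap_\al V_{i_\al}\neq\emptyset$. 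This is \emph{not} automatic for arbitrary chains. The paper's proof expands the resolvents as Neumann series, so that each scalar coefficient $C_{i_{-1}\ldots i_{M-1}}$ is an explicit product of factors of the form $(\ga_p)_{i_\al i_{\al+1}}$, $(\de_p)_{i_\al i_{\al+1}}$, or $(p(x_s)-p(x_0))_{i_\al i_{\al+1}}$; then it shows that if $K_{i_\al i_{\al+1}}\cap U=\emptyset$ for some $\al$, the corresponding factor vanishes, hence $C=0$. Only then does Lemma~\ref{l:cover} give $\bigcap_\al K_{i_\al i_{\al+1}}\neq\emptyset$ and hence the full cocycle collapse. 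Your proposal invokes Lemma~\ref{l:cover} only for a single pair $(i,j)$, which is not enough; the whole point of that lemma is to control arbitrarily long chains of indices simultaneously, and this only becomes usable once you have isolated the monomials via the series expansion.

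In short: the paper's Neumann-series bookkeeping is not a stylistic choice but the mechanism that makes both issues above tractable. Your sketch would become a correct proof if you (i) replaced the false resolvent identity by the $\ga/\de$ decomposition with the $q$-correction, and (ii) carried out the chain-by-chain analysis rather than asserting a global block identity for $(\la-a_{p_A})^{-1}$.
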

\begin{proof}
Let us choose a $U \in \G U$, such that $x = (x_0,x_1,\ldots,x_{2n}) \in U^{2n+1}$. We remark that $g_{ij}(x_s) = g_{ij}(x_0)$ for all $s \in \{0,1,2,\ldots,2n\}$ and all $i,j \in \{1,2,\ldots,N\}$.

For $\la \in \cc$ with $| \la - 1 | = 1/2$, we define
\[
\begin{split}
\ga_{p_A}(\la,x_0) & = p_A(x_0) / (\la - 1) + (1 - p_A(x_0))/\la \q  \T{and} \\
\de_{p_A}(x,t) & = \sum_{s = 1}^{2n} t_s \cd ( p_A(x_s) - p_A(x_0)) ,
\end{split}
\]
so that $a_{p_A}(x,t) = p_A(x_0) + \de_{p_A}(x,t)$ and
\[
( \la - p_A(x_0) ) \cd \ga_{p_A}(\la,x_0) = 1.
\]
In particular, we have the power-series expansion 
\begin{equation}\label{eq:neumann}
(\la - a_{p_A})^{-1} = \sum_{k = 0}^\infty ( \ga_{p_A}(\la) \cd \de_{p_A} )^k \ga_{p_A}(\la) \q |\la - 1| = 1/2,
\end{equation}
which converges absolutely since $\| \de_{p_A} \| \leq \frac{1}{4}$. Remark that we are suppressing the point $(x,t) \in U^{2n+1} \ti \De^{2n}$ and the point $x_0 \in U$ from the notation (and we will often do so below as well).

%
Notice that the exterior derivative of $(\la - a_{p_A})^{-1}$ (again in the direction of the simplex $\De^{2n}$) can be easily computed:
\[
d \big( (\la - a_{p_A})^{-1} \big) 
= \sum_{s = 1}^{2n} (\la - a_{p_A})^{-1} ( p_A(x_s) - p_A(x_0)) (\la - a_{p_A})^{-1} dt_s .
\]

We thus have that
\[
d( e_{p_A} )
= \frac{1}{2\pi i} \sum_{s = 1}^{2n} \int_{ |\la - 1| = 1/2} 
(\la - a_{p_A})^{-1} ( p_A(x_s) - p_A(x_0)) (\la - a_{p_A})^{-1} \, d\la \, dt_s
\]
and hence that
\[ 
\begin{split}
 e_{p_A} d( e_{p_A} )^{\we 2n}
&  = \sum_{ \si \in S_{2n}} \T{sign}(\si) \cd \frac{1}{(2\pi i)^{2n+1}} 
\int_{ |\la_0 - 1| = 1/2} \ldots \int_{ |\la_{2n} - 1| = 1/2} \\
& \q  (\la_0 - a_{p_A})^{-1} \prod_{r = 1}^{2n} (\la_r - a_{p_A})^{-1} ( p_A(x_{\si(r)}) - p_A(x_0)) (\la_r - a_{p_A})^{-1} \\
& \q \q  d\la_0 \ldots d\la_{2n} \, d t_1 \wlw d t_{2n} ,
\end{split}
\]
where $S_{2n}$ denotes the group of all permutations $\si$ of $2n$ letters.

Since a similar expression holds when $p_A$ is replaced by $p$, we may focus on proving the identity
\begin{align*}
\Big( (\la_0 - a_{p_A})^{-1} &\prod_{r = 1}^{2n} (\la_r - a_{p_A})^{-1} ( p_A(x_{\si(r)}) - p_A(x_0)) (\la_r - a_{p_A})^{-1} \Big)_{ij} \\
= \Big( (\la_0 - a_p)^{-1} & \prod_{r = 1}^{2n} (\la_r - a_p)^{-1} ( p(x_{\si(r)}) - p(x_0)) (\la_r - a_{p})^{-1} \Big)_{ij} \cd g_{ij}(x_0),
\end{align*}
for each fixed $i,j \in \{1,\ldots,N\}$, each permutation $\si \in S_{2n}$ and each $\la_0,\ldots,\la_{2n} \in \cc$ with $|\la_r - 1| = 1/2$ for all $r \in \{0,\ldots,2n\}$.
%

We now apply the power-series expansion from Equation \eqref{eq:neumann} (both for $a_{p_A}$ and $a_p$), so we reduce the lemma to proving that

\begin{multline}\label{eq:firstid}
\Big( ( \ga_{p_A}(\la_0) \cd \de_{p_A} )^{k_0} \ga_{p_A}(\la_0) \\
 \cd \prod_{r = 1}^{2n}( \ga_{p_A}(\la_r) \cd \de_{p_A} )^{k_r} \ga_{p_A}(\la_r)
\cd ( p_A(x_{\si(r)}) - p_A(x_0)) ( \ga_{p_A}(\la_r) \cd \de_{p_A} )^{l_r} \ga_{p_A}(\la_r) \Big)_{ij} 
\end{multline}
is equal to
\begin{multline*}
 \Big( ( \ga_p(\la_0) \cd \de_p)^{k_0} \ga_p(\la_0) \\ 
 \cd \prod_{r = 1}^{2n}( \ga_p(\la_r) \cd \de_p )^{k_r} \ga_p(\la_r)
\cd ( p(x_{\si(r)}) - p(x_0)) ( \ga_p(\la_r) \cd \de_p)^{l_r} \ga_p(\la_r) \Big)_{ij} \cd g_{ij}(x_0),
\end{multline*}
for every $(k_0,\ldots,k_{2n}) \in (\nn \cup \{0\})^{2n+1}$ and every $(l_1,\ldots,l_{2n}) \in (\nn \cup \{0\})^{2n}$.

We are going to reduce our task even further. Thus, let us fix $(k_0,\ldots,k_{2n}) \in (\nn \cup \{0\})^{2n+1}$ and $(l_1,\ldots,l_{2n}) \in (\nn \cup \{0\})^{2n}$. Using indices $\al, \be \in \{1,\ldots,N\}$ for the $(m \ti m)$-blocks we notice that
\begin{align*}
(\ga_{p_A})_{\al \be} \cdot q &= (\ga_p)_{\al \be} \cd g_{\al \be}(x_0)\\
(\de_{p_A})_{\al \be}  &= (\de_p)_{\al \be} \cd g_{\al \be}(x_0) \\
(p_A)_{\al \be}(x_s)   &= p_{\al \be}(x_s) \cd g_{\al \be}(x_0),
\end{align*}
for all $s \in \{0,1,\ldots,2n\}$. Letting
\[
M = 1 + 6n + 2k_0 + 2 \sum_{r = 1}^{2n} (k_r + l_r) 
\]
denote the number of multiplicative factors involved in the operator in Equation \eqref{eq:firstid}, using block-multiplication, again with $(m \ti m)$-blocks, we may rewrite this operator as
\begin{equation*}
\sum_{i_0,\dots, i_{M-2} = 1}^N C_{ii_0i_1 \ldots i_{M-2}j}\cdot (g_{ii_0}g_{i_0i_1} \clc g_{i_{M-3} i_{M-2}}g_{i_{M-2}j})(x_0),
\end{equation*}
where
\begin{multline*}
\sum_{i_0,\dots,i_{M-2} = 1}^N C_{ii_0i_1 \ldots i_{M-2}j} =\Big( ( \ga_p(\la_0) \cd \de_p)^{k_0} \ga_p(\la_0) \\
 \cd \prod_{r = 1}^{2n}( \ga_p(\la_r) \cd \de_p )^{k_r} \ga_p(\la_r)
\cd ( p(x_{\si(r)}) - p(x_0)) ( \ga_p(\la_r) \cd \de_p)^{l_r} \ga_p(\la_r) \Big)_{ij}.
\end{multline*}

Let us now fix indices $i_0, \ldots, i_{M-2} \in \{1,\ldots,N\}$. To ease the notation, we put 
\[
i_{-1} = i \q \T{and} \q i_{M - 1} = j .
\]
It suffices to show that
\[
C_{i_{-1}i_0\ldots i_{M-1}} \cd (g_{i_{-1}i_0} \clc g_{i_{M-2} i_{M-1}})(x_0)
= C_{i_{-1}i_0 \ldots i_{M-1}} \cd g_{i_{-1} i_{M-1}}(x_0).
\]
We claim that if $C_{i_{-1}i_0 \ldots i_{M-1}}$ is nonzero, then
\begin{equation}\label{eq:intersection}
K_{i_\alpha i_{\al + 1}}\cap U\neq\emptyset \qquad \forall \alpha \in \{-1,0,\ldots,M-2\}  
\, \, \T{with } i_\al \neq i_{\al + 1}.
\end{equation}
If Equation \eqref{eq:intersection} holds, then by virtue of Lemma \ref{l:cover} we must have that
\[
\bigcap_{\al = -1}^{M-2} K_{i_\al i_{\al + 1}} \neq \emptyset ,
\]
which in turn means $\bigcap_{\alpha = -1}^{M - 2} V_{i_\alpha i_{\al + 1}}\neq \emptyset$, therefore the cocycle relations hold and 
\[
(g_{i_{-1}i_0} \clc g_{i_{M-2} i_{M-1}})(x_0)=g_{i_{-1} i_{M-1}}(x_0),
\]
which is what we set out to prove.

So let us suppose that $K_{i_\alpha i_{\al + 1}}\cap U=\emptyset$ for some $\al \in \{-1,0,\ldots,M-2\}$ with $i_\al \neq i_{\al + 1}$. We are going to show that $C_{i_{-1} i_0 \ldots i_{M-1}} = 0$. There are three cases: the pair $i_\alpha i_{\al + 1}$ can appear in a term of the form $\ga_p(\lambda,x_0)_{i_\alpha i_{\al + 1}}$, or $(\de_p(x,t))_{i_{\alpha} i_{\al + 1}}$, or $(p(x_s)-p(x_0))_{i_\alpha i_{\al + 1}}$. The expression for $C_{i_{-1}i_0 \ldots i_{M-1}}$ involves products of terms of the previous three forms, so that if one is zero, then $C_{i_{-1}i_0 \ldots i_{M-1}}=0$.

In the first case, since $i_{\al} \neq i_{\al + 1}$, $x_0 \in U$ and $\supp( \chi_{i_\al}) \cap \supp(\chi_{i_{\al + 1}}) = K_{i_\alpha i_{\al + 1}}$, we have that
\[
\gamma_p(\lambda,x_0)_{i_\alpha i_{\al + 1}} = \frac{\sqrt{\chi_{i_\al} \chi_{i_{\al + 1}}}(x_0)}{\lambda-1} - \frac{\sqrt{\chi_{i_\alpha}\chi_{i_{\al + 1}}}(x_0)}{\lambda} = 0 .
\]
The second case follow from the third case, which is obvious since $x_s, x_0 \in U$ so that
\[
(p(x_s)-p(x_0))_{i_\alpha i_{\al + 1}}
= \sqrt{\chi_{i_\alpha}\chi_{i_{\al + 1}}}(x_s)- \sqrt{\chi_{i_\alpha}\chi_{i_{\al + 1}}}(x_{0}) = 0 . \qedhere
\]
%
%
\end{proof}

\begin{proof}[Proof of Theorem \ref{thm:projid}]
When $n=0$, we have that 
\[
\Chern^0_\phi(p_A)(x)=\phi(p_A(x)) = \sum_{i=1}^N\chi_i(x)\phi(g_{ii}(x)) = \phi(q),
\]
for all $x \in X$. When $n>0$, we obtain from Lemma \ref{l:realtech} that
\[
\sum_{i=1}^N \phi\Big(\Big( e_{p_A}(x,t) d\big( e_{p_A}(x,t) \big)^{\we 2n} \Big)_{\hspace*{-0.4ex}ii}\Big)
 = \sum_{i=1}^N \text{Tr} \Big(e_p(x,t) d\big( e_p(x,t) \big)^{\we 2n}\Big)\cdot \phi(g_{ii}(x_0)),
\]
for all $x \in \G U^{2n + 1}$ and all $t \in \De^{2n}$. Hence, since $\phi(g_{ii}(x_0))= \phi(q)$ and the projection $p$ is Murray-von Neumann equivalent to $1\in C(X)$, we obtain that
\[
\Chern^{2n}_\phi([p_A])=\Chern^{2n}([1]) \cd \phi(q) = 0 . \qedhere
\]
\end{proof}

\section{Index theorem --- Proof of Theorem B}

In this section we prove a slight generalization of Atiyah's $L^2$-index theorem in a $K$-theoretic setting. The context will be as follows:

We consider a flat bundle over a second countable, compact Hausdorff space $X$. The model fiber of our bundle will be a fixed finitely generated projective module, $q A^m$, over a unital $C^*$-algebra $A$ ($q\in M_m(A)$. We assume that $A$ is equipped with a faithful tracial state $\phi : A \to \mathbb{C}$. 

Following the procedure described in the beginning of Subsection \ref{sec:flat}, the above data yields a projection
\[
p_A \in M_{N \cd m}( C(X,A)),
\]
given by the formula in Equation \eqref{eq:pproj}. In particular, we get a class $[p_A] \in K_0(C(X,A))$ and we may define the index homomorphism
\[
\T{ind}_A : \KK[0][]{C(X)}{\Cc} \to \rr
\]
as the composition
\[
\xymatrix@C=3cm{
\KK[0][]{C(X)}{\Cc} \ar[d]^-{\tau_{A}} & & \\
\KK[0][]{C(X, A)}{A}  \ar[r]^-{[p_{A}]\hatotimes_{C(X,A)}-} & K_0(A) \ar[r]^-{\phi_*} & \rr .
}
\]
On the other hand, we still have the simple index map 
\[
\T{ind} : \KK[0][]{C(X)}{\Cc} \to \zz
\]
obtained by pairing with the class $[1_{C(X)}]\in K_0(C(X))$.

The (generalized) $K$-theoretic version of Atiyah's $L^2$-index theorem can now be stated as follows (we provide a proof at the end of this section):

\begin{theorem}\label{thm:iindex}
We have the identity
\[
\T{ind}_{A} = \phi(q) \cd \T{ind} : \KK[0][]{C(X)}{\Cc} \to \rr.
\]
In particular, 
\[
\T{ind}_{A}(x) \in \phi(q) \cd \zz \q \mbox{for all } x \in \KK[0][]{C(X)}{\Cc} .
\]
\end{theorem}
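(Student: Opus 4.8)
\textbf{Step 1: reduction to a $\T{II}_1$-factor.} The plan is to reduce to the case where $A$ is a $\T{II}_1$-factor, and there to promote the cohomological vanishing of Theorem \ref{thm:projid} to a genuine $K$-theoretic identity, after which the index computation is essentially formal. First I would choose a unital trace-preserving embedding $\iota \colon (A,\phi) \hookrightarrow (B,\psi)$ into a $\T{II}_1$-factor $B$; such an embedding always exists, e.g. by passing to the finite von Neumann algebra $\pi_\phi(A)''$ generated by the GNS representation of $\phi$ and then embedding it trace-preservingly into a $\T{II}_1$-factor via a free-product construction. Applying $\iota$ entrywise to the defining cocycle $\{g_{ij}\}$ produces the corresponding flat bundle over $X$, whose defining projection is $p_B = (\T{id}_{C(X)}\hot\iota)(p_A)$. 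Naturality of the exterior tensor product $\tau_\bullet$, associativity of the interior Kasparov product, the identity $\iota_*(z) = z\hot_A[\iota]$ on $K_0(A)$, and $\psi_*\ci\iota_* = \phi_*$ then combine to give
\[
\T{ind}_A(y) \;=\; \psi_*\big([p_B]\hot_{C(X,B)}\tau_B(y)\big) \;=\; \T{ind}_B(y), \qquad y \in \KK[0][]{C(X)}{\Cc},
\]
together with $\phi(q) = \psi(\iota(q))$. Hence it suffices to prove the theorem when $A$ is a $\T{II}_1$-factor.

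\textbf{Step 2: identifying $[p_A]$.} Assume now that $A$ is a $\T{II}_1$-factor. By Theorem \ref{thm:projid}, the Chern character of $[p_A] \in K_0(C(X,A))$ is concentrated in degree $0$ and equals $[\phi(q)] \in H^0(X,\rr)$. On the other hand $\T{Ch}^0([1_{C(X)}]) = [1]$ and $\T{Ch}^{2n}([1_{C(X)}]) = 0$ for $n>0$ (the trivial-bundle case, as in the proof of Theorem \ref{thm:projid}), so Lemma \ref{l:cheagr} shows that the exterior product $[1_{C(X)}]\ti[q] \in K_0(C(X,A))$ has the same Chern character $\{[\phi(q)],0,0,\dots\}$. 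Since $A$ is a $\T{II}_1$-factor, Proposition \ref{prop:injchern} tells us that $\T{Ch}_\phi$ is injective, so I obtain the $K$-theoretic identity
\[
[p_A] \;=\; [1_{C(X)}]\ti[q] \qquad \T{in } K_0(C(X,A)).
\]

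\textbf{Step 3: computing the index.} By construction $\tau_A(y)$ is the exterior Kasparov product of $y$ with the unit class $1_A \in \KK[0][]{A}{A}$. Using the compatibility of the exterior product with the interior Kasparov product (see \cite[\S 17.8, \S 18.9]{black:kth}) together with the identification $C(X,A) \cong C(X)\hot A$, I would compute
\[
[p_A]\hot_{C(X,A)}\tau_A(y) \;=\; \big([1_{C(X)}]\ti[q]\big)\hot_{C(X)\hot A}\big(y\ti 1_A\big) \;=\; \big([1_{C(X)}]\hot_{C(X)}y\big)\ti\big([q]\hot_A 1_A\big) \;=\; \T{ind}(y)\cd[q],
\]
since $[1_{C(X)}]\hot_{C(X)}y = \T{ind}(y) \in \zz = \KK[0][]{\Cc}{\Cc}$ and $[q]\hot_A 1_A = [q]$. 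Applying $\phi_*$ then yields $\T{ind}_A(y) = \phi(q)\cd\T{ind}(y)$, which is the asserted identity, and the integrality statement $\T{ind}_A(x)\in\phi(q)\cd\zz$ follows at once because $\T{ind}$ is $\zz$-valued.

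\textbf{Anticipated difficulty.} The decisive point is Step 2: upgrading an equality of Chern characters in $\oplus_n H^{2n}(X,\rr)$ to the $K$-theoretic equality $[p_A] = [1_{C(X)}]\ti[q]$ — it is this, rather than any manipulation at the level of cohomology, that makes the index trivial to evaluate. This upgrade is legitimate only because the Chern character is an \emph{isomorphism} for $\T{II}_1$-factor coefficients (Proposition \ref{prop:injchern}), which is precisely the reason for the reduction carried out in Step 1. The remaining ingredients — the naturality of $\tau_\bullet$ and of the Kasparov product under $\iota$ used in Step 1, and the multiplicativity identity of Step 3 — are routine, but the juggling of exterior and interior products in $\KKK$-theory should be performed with some care.
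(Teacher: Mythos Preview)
Your proposal is correct and follows essentially the same route as the paper: embed $(A,\phi)$ trace-preservingly into a $\T{II}_1$-factor $B$, use Theorem \ref{thm:projid} together with the injectivity of $\T{Ch}_\psi$ (Proposition \ref{prop:injchern}) to obtain the $K$-theoretic identity $[p_B] = [1_{C(X)}]\ti[\iota(q)]$ in $K_0(C(X,B))$, and then unwind the Kasparov product. The paper packages your Steps 1 and 3 as separate lemmas (Lemma \ref{l:vnind} and Lemma \ref{l:trivial}, the latter argued via a commutative diagram rather than your direct multiplicativity computation), but the content is identical.
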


\begin{corollary}\label{t:gindtriv}
Suppose that $G$ is a countable discrete group and let $\phi\colon C^*_r(G)\to \C$ denote the canonical faithful tracial state. Suppose that $p : \tilde{X} \to X$ is a principal $G$-bundle, where $X$ is a second-countable, compact Hausdorff space. Then
\[
\phi_*\circ \eta_{\tilde{X}} = \T{ind}_{C_r^*(G)} = \T{ind} : \KK[0][]{C(X)}{\Cc} \to \rr.
\]
In particular, 
\[
\T{ind}_{C_r^*(G)}(x) \in \zz \q \mbox{for all } x \in \KK[0][]{C(X)}{\Cc} .
\]
\end{corollary}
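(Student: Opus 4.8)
\emph{Proof strategy.} The plan is to recognise the Mi\v{s}\v{c}enko line bundle as a flat bundle of finitely generated projective Hilbert $C^*$-modules in the sense of Subsection~\ref{sec:flat}, with $A = C^*_r(G)$, and then to invoke Theorem~\ref{thm:iindex} directly. Concretely, I would take the model fibre to be $A$ itself (so that $m = 1$ and $q = 1_{C^*_r(G)}$), equipped with its canonical faithful tracial state $\phi$. Since $G$ is discrete, the transition maps of the given principal bundle $p : \tilde{X} \to X$, relative to a finite trivialising open cover $\{V_i\}_{i=1}^N$, are locally constant functions $g_{ij} : V_i \cap V_j \to G$, and the maps $x \mapsto \la_{g_{ij}(x)}$ are locally constant maps into $U(C^*_r(G))$ satisfying the cocycle relations $\la_{g_{ii}} = 1$ and $\la_{g_{ij}(x)}\la_{g_{jk}(x)} = \la_{g_{ik}(x)}$ for $x$ in the triple overlaps, because $g \mapsto \la_g$ is a group homomorphism. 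Fixing a partition of unity $\{\chi_i\}$ subordinate to $\{V_i\}$, the recipe of Subsection~\ref{sec:flat} (Equation~\eqref{eq:pproj}) then produces the projection $p_A \in M_N(C(X, C^*_r(G)))$ with $(p_A)_{ij} = \sqrt{\chi_i\chi_j}\,\la_{g_{ij}}$, and hence the index homomorphism $\T{ind}_A : \KKK_0(C(X),\Cc) \to \rr$.

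The decisive step is the identification $[M] = [p_A]$ in $K_0\big(C(X)\hot C^*_r(G)\big) = \KKK_0\big(\Cc, C(X)\hot C^*_r(G)\big)$. This is exactly the content of Proposition~\ref{prop:ConnesM}: the finite frame $\{\sqrt{\rho_i}\}_{i=1}^N$ constructed there satisfies $\langle \sqrt{\rho_i}, \sqrt{\rho_j}\rangle = \sqrt{\chi_i\chi_j}\,\la_{g_{ij}}$, so the projection associated with $M$ is precisely the $p_A$ above. Using the identification $C(X)\hot C^*_r(G) \cong C(X, C^*_r(G))$, together with the observation that the homomorphism $\tau_{C^*_r(G)}$ occurring in the definition of $\eta_{\tilde{X}}$ is exactly the exterior-tensor-product map $\tau_A$ used in the definition of $\T{ind}_A$, I would then unwind both definitions to see that $\phi_*\big(\eta_{\tilde{X}}(y)\big) = \T{ind}_A(y)$ for every $y \in \KKK_0(C(X),\Cc)$, since both sides equal $\phi_*$ of the interior Kasparov product $[p_A] \hot_{C(X,A)} \tau_A(y)$. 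This is the first asserted equality $\phi_* \ci \eta_{\tilde{X}} = \T{ind}_{C^*_r(G)}$.

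For the second equality I would apply Theorem~\ref{thm:iindex} to $A = C^*_r(G)$ with $q = 1$: it gives $\T{ind}_A = \phi(1_{C^*_r(G)}) \cd \T{ind} = \T{ind}$, because $\phi$ is a state, so $\phi(1) = 1$. Combining this with the previous paragraph yields $\phi_* \ci \eta_{\tilde{X}} = \T{ind}$, and the closing integrality assertion is then immediate, since $\T{ind}$ is by construction the pairing with $[1_{C(X)}] \in K_0(C(X))$ and hence $\zz$-valued. I expect no real obstacle here beyond the substantive input of Theorem~\ref{thm:iindex}; the one point in the corollary's own proof that requires care is the bookkeeping that matches the two descriptions of the Mi\v{s}\v{c}enko projection and confirms that $\tau_{C^*_r(G)}$ and $\tau_A$ are literally the same homomorphism --- both routine once the dictionaries of Proposition~\ref{prop:ConnesM} and Subsection~\ref{sec:flat} are aligned. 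One should also note at the outset that the standing hypotheses of Section~\ref{s:thmA} are satisfied: a principal $G$-bundle over a second-countable compact Hausdorff base with countable discrete fibre has a second-countable, locally compact Hausdorff total space carrying a free, proper, cocompact $G$-action.
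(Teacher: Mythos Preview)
Your proposal is correct and follows essentially the same route as the paper: set up the transition functions $\la_{g_{ij}}$ of the Mi\v{s}\v{c}enko line bundle as a flat cocycle with values in $U(C^*_r(G))$, then apply Theorem~\ref{thm:iindex} with $A = C^*_r(G)$ and $q = 1_A$. The paper's proof is terser and leaves the identification $[M] = [p_A]$ (which you correctly source from Proposition~\ref{prop:ConnesM}) implicit, but the argument is the same.
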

\begin{proof}
If $\{V_i\}_{i=1}^N$ is a finite open cover of $X$, such that $\{p^{-1}(V_i)\}_{i=1}^N$ is a trivializing cover for $p : \tilde{X} \to X$, then we get locally constant transition functions
\[
g_{ij}\colon V_i\cap V_j \to G,
\]
satisfying the cocycle relations $g_{ii}=e$ and $g_{ij}g_{jk}=g_{ik}$ whenever $V_i\cap V_j\cap V_k$ is non-empty. If we compose with the left regular representation, we get locally constant maps into the unitary group of $C^*_r(G)$
\[
\la_{g_{ij}}\colon V_i\cap V_j\to U(C^*_r(G)),
\]
which fit the setup outlined in Section \ref{sec:flat}. We now apply the previous theorem with $A=C^*_r(G)$ and $q=1_A$.
\end{proof}

To give the proof of Theorem \ref{thm:iindex}, we first recall how to embed $A$ into a $\T{II}_1$-factor:

\begin{proposition}\label{prop:icc}
There exists a von Neumann $\text{II}_1$-factor $B$ equipped with a faithful tracial state $\lambda : B \to \cc$ and an embedding $\iota\colon A \hookrightarrow B$ such that $\lambda(\iota(x))=\phi(x)$ for any $x\in A$.
\end{proposition}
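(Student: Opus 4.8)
The plan is to produce $B$ as a group-von-Neumann-algebra-type completion obtained by combining $A$ with a suitable ICC group. First I would recall the standard construction of a $\mathrm{II}_1$-factor from an ICC group: if $\Gamma$ is a countable ICC group, then the group von Neumann algebra $L(\Gamma)$ is a $\mathrm{II}_1$-factor with canonical faithful normal tracial state $\tau_\Gamma(x) = \langle x \delta_e, \delta_e\rangle$. The idea is then to form the von Neumann algebra tensor product $M = A'' \mathbin{\bar\otimes} L(\Gamma)$, where $A''$ denotes the weak closure of $A$ in the GNS representation $\pi_\phi$ associated to $\phi$. The trace $\phi$ extends to a faithful normal tracial state $\bar\phi$ on $A''$ (since $\phi$ is faithful, the GNS representation is faithful and $\bar\phi(x) = \langle x \xi_\phi, \xi_\phi\rangle$ works), and then $\lambda := \bar\phi \mathbin{\bar\otimes} \tau_\Gamma$ is a faithful normal tracial state on $M$. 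The embedding is $\iota : A \hookrightarrow A'' \hookrightarrow A'' \mathbin{\bar\otimes} L(\Gamma) = M$, $a \mapsto a \otimes 1$, which is trace-preserving: $\lambda(\iota(a)) = \bar\phi(a)\tau_\Gamma(1) = \phi(a)$.

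The one nontrivial point is that $M$ must be a \emph{factor}, not merely a finite von Neumann algebra. Here $A''$ need not have trivial center, so the tensor product $A'' \mathbin{\bar\otimes} L(\Gamma)$ need not be a factor either. To fix this I would instead arrange for the factor property by hand: pass to a $\mathrm{II}_1$-factor containing $A''$. One clean route is to use a theorem of the ``free product'' or ``HNN'' type — e.g.\ the free product von Neumann algebra $A'' * L(\mathbb{F}_2)$ with respect to the given traces is a $\mathrm{II}_1$-factor (by results of Voiculescu--Dykema--Radulescu, free products of finite von Neumann algebras with a factor free component, or with sufficiently large free component, are factors). Alternatively, and perhaps most elementary, one can use that any von Neumann algebra with a faithful normal tracial state embeds trace-preservingly into a $\mathrm{II}_1$-factor: realize $A''$ as a subalgebra of some $(N,\tau)$ with $N$ a $\mathrm{II}_1$-factor (for instance the crossed product of a suitable diffuse abelian algebra, or use the fact that every separable $\mathrm{II}_1$ von Neumann algebra embeds into the hyperfinite $\mathrm{II}_1$-factor $\mathcal R$ when amenable, and into $\mathcal R$-related factors in general — but to avoid Connes embedding subtleties, the free-product argument is safest). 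In any case, the statement only requires existence of \emph{some} such $B$, so I would pick whichever construction requires the least machinery: form $B = A'' * L(\mathbb{Z})$ or $A'' \mathbin{\bar\otimes} L(\Gamma)$ with $\Gamma$ chosen so that the result is a factor, or simply cite that every finite von Neumann algebra with a faithful normal trace sits inside a $\mathrm{II}_1$-factor preserving the trace.

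The key steps, in order: (1) form the GNS completion $A'' = \pi_\phi(A)''$ and note $\phi$ extends to a faithful normal tracial state $\bar\phi$; (2) embed $(A'', \bar\phi)$ trace-preservingly into a $\mathrm{II}_1$-factor $(B,\lambda)$ by one of the constructions above; (3) compose $A \hookrightarrow A'' \hookrightarrow B$ to get $\iota$, and check $\lambda \circ \iota = \phi$ directly from the definitions. Steps (1) and (3) are completely routine. The main obstacle is step (2): ensuring the ambient algebra is genuinely a \emph{factor}. I expect to handle it by the free-product construction, invoking that the reduced free product of two finite von Neumann algebras, at least one of which is diffuse (or with the free group factor free component), is a $\mathrm{II}_1$-factor — this is precisely the kind of ``adding an ICC/free ingredient'' suggested in the acknowledgements (Goffeng's suggestion to use $\mathrm{II}_1$-factors). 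If one wants to minimize citations, one can instead note $A$ is separable here (since $C_r^*(G)$ is separable for countable $G$), so $A''$ is a separable finite von Neumann algebra, hence embeds trace-preservingly into an ultrapower or free-product $\mathrm{II}_1$-factor, and take $B$ to be that factor.
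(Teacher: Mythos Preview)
Your proposal is correct and lands on essentially the same construction as the paper: form the GNS weak closure $\mathbb{L}(A)=\pi_\phi(A)''$ with its extended trace, and then take the free product $B=\mathbb{L}(A)*\mathbb{L}(\mathbb{F}_2)$, invoking Dykema's theorem that the free product of a finite von Neumann algebra with a $\mathrm{II}_1$-factor is again a $\mathrm{II}_1$-factor. The paper simply goes straight to this choice without the exploratory discussion of tensor products or other embeddings.
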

\begin{proof}
We let $\mathbb{L}(A)$ be the finite von Neumann algebra obtained by closing (in the weak operator topology) the image of $A$ under the faithful representation coming from the GNS construction associated to the faithful tracial state $\phi : A \to \cc$.

We set $B = \mathbb{L}(A) * \mathbb{L}(\mathbb{F}_2)$, the free product of $\mathbb{L}(A)$ and the group von Neumann algebra of the free group on two generators. Since $\mathbb{L}(A)$ is finite and $\mathbb{L}(\mathbb{F}_2)$ is a $\text{II}_1$-factor, the required properties of $B$ follow from \cite[Theorem 2.1]{dyk:vnfree}.
\end{proof}

Set $p_B=\iota(p_A) \in M_{m \cd N}(C(X,B))$ and define the von Neumann algebraic index map
\[
\T{ind}_B =  \la_* \ci ( [p_B] \hot_{C(X,B)} -) \ci \tau_B : \KK[0][]{C(X)}{\Cc} \to \rr.
\]

The following two simple lemmas are also needed for the proof of Theorem \ref{thm:iindex}. 

\begin{lemma}\label{l:vnind}
We have the identity
\[
\T{ind}_A = \T{ind}_B : \KK[0][]{C(X)}{\Cc} \to \rr.
\]
\end{lemma}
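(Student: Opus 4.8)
The plan is to show that the index maps $\T{ind}_A$ and $\T{ind}_B$ factor identically through the same element of $K_0(A)$, pushed forward along the trace-preserving embedding $\iota\colon A\hookrightarrow B$. The key observation is that the inclusion $\iota$ induces a unital $*$-homomorphism $\T{id}_{C(X)}\hot\iota\colon C(X,A)\to C(X,B)$, and by construction $p_B=(\T{id}_{C(X)}\hot\iota)(p_A)$. Naturality of the interior Kasparov product and of the external product $\tau$ with respect to $*$-homomorphisms then gives, for any $x\in\KK[0][]{C(X)}{\Cc}$, the chain of identities
\[
[p_B]\hot_{C(X,B)}\tau_B(x) = \iota_*\big([p_A]\hot_{C(X,A)}\tau_A(x)\big)\in K_0(B),
\]
where $\iota_*\colon K_0(A)\to K_0(B)$ is the map induced on $K$-theory. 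Indeed, the class $\tau_B(x)\in\KK[0][]{C(X,B)}{B}$ is the pushforward of $\tau_A(x)\in\KK[0][]{C(X,A)}{A}$ along $\iota$ in both the first and second variable, so the module-level computation of the Kasparov product is carried along by $\iota$.

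The second ingredient is the compatibility of the traces: by Proposition \ref{prop:icc} we have $\la(\iota(a))=\phi(a)$ for all $a\in A$, which at the level of $K$-theory means precisely that $\la_*\ci\iota_*=\phi_*\colon K_0(A)\to\rr$. Concretely, for a formal difference of projections $[p]-[q]$ over $A$ one has $\la_*(\iota_*([p]-[q]))=\la(\iota(p)-\iota(q))=\phi(p-q)=\phi_*([p]-[q])$, and this extends to all of $K_0(A)$ by linearity. Combining this with the displayed identity above yields, for every $x\in\KK[0][]{C(X)}{\Cc}$,
\[
\T{ind}_B(x)=\la_*\big([p_B]\hot_{C(X,B)}\tau_B(x)\big)=\la_*\iota_*\big([p_A]\hot_{C(X,A)}\tau_A(x)\big)=\phi_*\big([p_A]\hot_{C(X,A)}\tau_A(x)\big)=\T{ind}_A(x),
\]
which is the asserted equality of maps $\KK[0][]{C(X)}{\Cc}\to\rr$.

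The only point requiring a little care — and the step I expect to be the main (though mild) obstacle — is the functoriality statement: one must check that pushing a Kasparov module forward along $\iota$ in the coefficient algebra, and then taking the interior product with the pushed-forward projection module, genuinely agrees with pushing forward the product. This is a standard naturality property of the Kasparov product under change of coefficients (descent of the product along $*$-homomorphisms; see \cite[Section 18]{black:kth}), but since $\tau_B(x)$ here is the external product of $x$ with $[1_B]$ rather than a bare pushforward of $\tau_A(x)$, one wants to observe that $\tau_B = (\text{pushforward along }\iota)\ci\tau_A$ as maps $\KK[0][]{C(X)}{\Cc}\to\KK[0][]{C(X,B)}{B}$, which follows from associativity of external products and the identity $[1_B]=\iota_*[1_A]$ in $K_0(B)=\KK[0][]{\Cc}{B}$. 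Once this bookkeeping is in place the rest is immediate.
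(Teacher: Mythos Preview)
Your argument is correct and is essentially the same as the paper's: both reduce to the functoriality of the Kasparov product and of $\tau$ under the $*$-homomorphism $\iota$, together with the trace compatibility $\lambda_*\circ\iota_*=\phi_*$. The paper packages these checks into a single commutative diagram passing through the intermediate group $\KK[0][]{C(X,A)}{B}$, which makes the bookkeeping cleaner; in particular, your final displayed identity ``$\tau_B=(\text{pushforward along }\iota)\circ\tau_A$'' does not type-check as written (the right-hand side lands in $\KK[0][]{C(X,A)}{B}$, not $\KK[0][]{C(X,B)}{B}$), and what you actually need---and what the paper's diagram encodes---is $\iota^*\tau_B=\iota_*\tau_A$ in $\KK[0][]{C(X,A)}{B}$ together with $[p_B]\hot_{C(X,B)}(-)=[p_A]\hot_{C(X,A)}\iota^*(-)$.
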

\begin{proof}
We put $f_A =[p_{A}]\hatotimes_{C(X,A)}-$ and $f_B = [p_B] \hot_{C(X,B)} - $ and notice that each subdiagram in the following diagram is commutative:
\[
\xymatrix{\KK[0][]{C(X)}{\Cc}\ar[d]^-{\tau_{A}} \ar[r]^-{\tau_B} & \KK[0][]{C(X,B)}{B} \ar[d]^-{\iota^*} \ar[dr]^-{f_B} &\\
\KK[0][]{C(X,A)}{A}\ar[dr]_-{f_A \qquad}\ar[r]^-{\iota_*} & \KK[0][]{C(X,A)}{B}\ar[r]^-{f_A} & K_0(B)\ar[d]^-{\lambda_*}\\
& K_0(A) \ar[ur]^-{\iota_*} \ar[r]^-{\phi_*} & \R .}
\]
This proves the lemma.
\end{proof}

%

\begin{lemma}\label{l:trivial}
The product of $\phi(q)$ and the index map, $\phi(q) \cd \T{ind}\colon \KK[0][]{C(X)}{\Cc} \to \rr$ agrees with the composition
\[
\xymatrixcolsep{4pc}\xymatrix{
\KK[0][]{C(X)}{\Cc} \ar[r]^-{\tau_B} & \KK[0][]{C(X,B)}{B} \ar[d]^-{[i(q)]\hot_{C(X,B)}-} & \\
& \KK[0][]{\Cc}{B} \ar[r]^-{\lambda_*} & \rr ,
}
\]
where $i(q) \in M_m( C(X,A) ) \su M_m( C(X,B) )$ denotes the constant map equal to $q \in M_m(A) \su M_m(B)$ everywhere.
\end{lemma}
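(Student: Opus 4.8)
The claim is essentially a compatibility statement between exterior products and the trace on $B$, so the plan is to reduce both sides to the same pairing on a $K$-theory group and then use multiplicativity. First I would unwind the definition of the right-hand composition: taking $\tau_B$ of a class $x \in \KK[0][]{C(X)}{\Cc}$ and then the interior Kasparov product with $[i(q)] \in K_0(C(X,B))$ amounts, since $i(q)$ is a \emph{constant} projection, to forming the exterior product $[q] \times x \in \KK[0][]{\Cc}{B} = K_0(B)$, where $[q] \in K_0(B)$ is the image of $q \in M_m(A) \subseteq M_m(B)$. Concretely, $i(q)$ factors through the unital inclusion $B \hookrightarrow C(X,B)$, so $[i(q)] \hot_{C(X,B)} \tau_B(x)$ is the image of $x$ under the map induced by that inclusion composed with multiplication by $[q]$; equivalently it is $\langle [q], - \rangle$ applied to $x$ viewed in $K_0(\Cc)$-land. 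The point is that the constancy of $i(q)$ makes the $C(X,B)$-tensor factor collapse to a plain scalar tensor factor.

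Next I would compute the left-hand side. The simple index map $\T{ind} : \KK[0][]{C(X)}{\Cc} \to \zz$ is pairing with $[1_{C(X)}] \in K_0(C(X))$, i.e. pullback along $\iota : \Cc \hookrightarrow C(X)$, landing in $K_0(\Cc) = \zz$. Under the obvious identifications $K_0(\Cc) \cong \zz$ and $\la_* : K_0(\Cc) \to \rr$ sending $1 \mapsto 1$, multiplying by $\phi(q) = \la(i(q)) \in \rr$ is the same as first applying $\iota^* = \T{ind}$ and then $\la_*([q]) \cd (-)$, since $\la_*([q]) = \phi(q)$. So the whole lemma comes down to the identity, for $x \in \KK[0][]{C(X)}{\Cc}$,
\[
\la_*\big( [q] \times x \big) = \la_*([q]) \cd \T{ind}(x) \in \rr,
\]
where $[q] \times x$ denotes the exterior Kasparov product.

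This last identity is precisely multiplicativity of the trace-induced map $\la_*$ under exterior products, combined with the fact that on $K_0(\Cc) = \zz$ the class $x$ becomes $\T{ind}(x) \cd [1_\Cc]$ after applying $\iota^*$, and $[q] \times [1_\Cc] = [q]$. I would spell this out by noting that $\tau_B$ followed by $[i(q)] \hot_{C(X,B)} -$ equals $\tau_B$ followed by $\iota^*$ (restriction along $\Cc \hookrightarrow C(X)$, now over the coefficient algebra $B$) followed by $[q] \hot_B -$, because $i(q)$ is constant; and $\tau_B \ci \iota^* = \iota^* \ci \tau_B$ trivially, while $\tau_B$ on the image of $\iota^*$ in $\KK[0][]{\Cc}{\Cc}=\zz$ just sends $n \mapsto n \cd [1_B]$. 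Then $[q] \hot_B (n \cd [1_B]) = n \cd [q]$ and $\la_*(n \cd [q]) = n \cd \phi(q)$. The main (and only mildly delicate) obstacle is bookkeeping: verifying carefully that replacing $i(q)$ — a genuine class in $K_0(C(X,B))$ — by the ``constant'' projection really does let one commute it past $\tau_B$ and reduce the $C(X,B)$-balanced tensor product to a $B$-balanced one, i.e. that $[i(q)] = \tau_B'([q])$ where $\tau_B'$ is exterior multiplication by $[1_{C(X)}]$ composed with the coefficient inclusion; once that is in place everything else is formal naturality of the Kasparov product and the definition of $\la_*$.
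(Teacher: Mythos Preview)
Your proposal is correct and follows essentially the same approach as the paper: factor $[i(q)]\hot_{C(X,B)}-$ through the pullback along the constant-function inclusion $B\hookrightarrow C(X,B)$ followed by $[q]\hot_B -$, commute this pullback with $\tau_B$, and then evaluate on $\KK[0][]{\Cc}{\Cc}\cong\zz$ where $\la_*([q]\hot_B n[1_B])=n\,\phi(q)$. The paper packages exactly these steps into a single commutative diagram, whereas you spell them out in prose (with some redundant framing in terms of exterior products), but the content is the same.
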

\begin{proof}
The result of the lemma follows by noting that the diagram here below is commutative
\[
\xymatrixcolsep{2.8pc}\xymatrix{\KK[0][]{C(X)}{\Cc}\ar[d]^-{\io^*} \ar[r]^-{\tau_B} & \KK[0][]{C(X,B)}{B} \ar[d]^-{\iota^*} \\ 
\KK[0][]{\Cc}{\Cc}\ar[d]_-{\cong}\ar[r]^-{\tau_B} & \KK[0][]{B}{B}\ar[dr]^-{[q] \hot_B - } \\
\Z \ar[r]_-{\phi(q) \cdot } & \R & \KK[0][]{\Cc}{B} \ar[l]^-{\lambda_*} , }
\]
where the right vertical $*$-homomorphism $i : N \to C(X,N)$ sends operators to constant maps. Indeed, the bottom diagram commutes since $([q] \hot_B - )\ci \tau_B$ maps the (positive) generator in $\KK[0]{\Cc}{\Cc}$ to the class in $K_0(B)$ represented by $q \in M_m(A) \su M_m(B)$. 
%
%
\end{proof}

%
%
%

\begin{proof}[{\bf Proof of Theorem \ref{thm:iindex}}]
By Proposition \ref{prop:icc} we have a trace-preserving embedding of $A$ in $B$ such that the von Neumann algebra $B$ is a $\T{II}_1$-factor. By Proposition \ref{prop:injchern} and Theorem \ref{thm:projid} we have that $[p_B] = [i(q)] \in \KK[0][]{\cc}{C(X,B)}$ and thus by Lemma \ref{l:vnind} and Lemma \ref{l:trivial} that
\[
\begin{split}
\T{ind}_{A}(x) & = \T{ind}_{B}(x) 
= \lambda_*\big( [ p_{B}] \hot_{C(X,B)} \tau_{B}(x) \big) \\
& = \lambda_*\big( [i(q)] \hot_{C(X,B)} \tau_{B}(x)\big)
= \phi(q)\cdot \T{ind}(x).
\end{split}
\]
This proves the theorem.
\end{proof}



\bibliography{Bibliography}
\bibliographystyle{amsplain-init}

\end{document}